\newtheorem{theorem}{Theorem}[section]
\newtheorem{lemma}[theorem]{Lemma}
\newtheorem*{remark}{Remark}
\newtheorem{definition}{Definition}[section]
\newtheorem{proposition}{Proposition}[section]
\title{Newhouse Laminations of polynomials on $\mathbb{C}^2$ }
\author{Marco Martens, Liviana Palmisano and Zhuang Tao}
\begin{document}
\bibliographystyle{plain}
\maketitle
\begin{abstract}
It has been recently discovered that in smooth unfoldings of maps with a rank-one homoclinic tangency there are codimension two laminations of maps with infinitely many sinks. Indeed, these laminations, called Newhouse laminations, occur also in the holomorphic context. In the space of polynomials of $\mathbb{C}^2$, with bounded degree, there are Newhouse laminations. 
\end{abstract}
\section{Introduction}
The theory of dynamical systems pretends to contribute to the study of real 
world systems. For this reason one expect that systems modeling natural 
processes have a certain form of stability, otherwise their behavior would 
not be observed.

The relevant aspects of a dynamical system are found in the attractor of the system, i.e.,the set where most orbits spent most of the time. Hence, one has to study the attractors of a systems as well as their stability. Attracting periodic points, i.e., sinks, are the simplest attractors exhibiting the strongest form of stability. Indeed, a sink persists in an open neighborhood. Moreover when the period of the sink becomes higher one expects that the neighborhood where the sink survives becomes smaller. 
If then one considers a map with infinitely many sinks with arbitrarily high periods, there is no reason to belief that they are simultaneously stable in any sense. Indeed Newhouse constructed maps with infinitely many sinks, see \cite{Newhouse, PT}.

However, in \cite{1}, it has been shown that in smooth unfoldings of maps with a rank one homoclinic tangency, there is a codimension two lamination of maps with infinitely many sinks. Surprisingly, the attracting sinks, all together, with their own topology, survive along the leaves of a lamination. They are stable in this sense.

A natural question is whether all relevant attractors  have a similar form of stability: do they survive along finite codimension manifolds? Indeed this is true in one-dimensional dynamics. For example, in the space of circle diffeomorphisms the topological classes of non periodic attractors are codimension one manifolds. In the higher dimensional setting this question is still completely open. However there are example of non periodic attractors which also survive along finite codimension manifolds, see \cite{CLM, Pa}.

The present paper is a continuation in the study of stability of attractors in the holomorphic setting. Namely there are Newhouse laminations in the space of polynomials of ${\mathbb{C}^2}$.
\vskip .2 cm
\paragraph{\bf Theorem A.}
The space $\text{Poly}_d({\mathbb{C}^2})$ of complex polynomials of $\mathbb{C}^2$ of degree at most $d$, with $d\ge 2$, contains a codimension $2$ lamination of maps with infinitely many sinks. The lamination is homeomorphic to the Baire set times $\mathbb D^{\text{D}-2}$ where $\text{D}$ is the dimension of $\text{Poly}_d({\mathbb{C}^2})$. The leaves of the lamination are holomorphic. The sinks persist along each leave of the lamination.
\vskip .2 cm
In particular, any generic three dimensional family of polynomials will cross the lamination in Theorem $A$. A specific example is described in the following theorem.
\vskip .2 cm

\paragraph{\bf Theorem B.}
Any holomorphic family $F:\mathbb C\times\mathbb C\times \mathbb D^{T}\to \text{Poly}_d({\mathbb{C}^2})$, with $T\geq 1$, containing the H\'enon family, i.e.
$$
F_{a,b,0}\left(\begin{matrix}
x\\y
\end{matrix}
\right)=\left(\begin{matrix}
a-x^2-by\\x
\end{matrix}
\right)
$$
contains a codimension $2$ lamination of maps with infinitely many sinks. The lamination is homeomorphic to the Baire set times $\mathbb D^{T}$. The leaves of the lamination are holomorphic. The sinks persist along each leave of the lamination.
\vskip .2 cm

Theorem $A$ and Theorem $B$ are examples of a more general theorem. Newhouse laminations exist in any holomorphic family unfolding a polynomial with a strong homoclinic tangency, see Definition \ref{unfolding}.

\vskip .2 cm

\paragraph{\bf Theorem C.} 
Let $F_{0,0,0}\in \text{Poly}_d({\mathbb{R}^2})$ be a polynomial with a strong homoclinic tangency and $F:\mathbb D\times\mathbb D\times \mathbb D^T\to \text{Poly}_d({\mathbb{C}^2})$, $T\geq 1$, an holomorphic family which unfolds $F_{0,0,0}$. Then, in $\mathbb D\times\mathbb D\times \mathbb D^T$ there exists a codimension $2$ lamination of maps with infinitely many sinks. The lamination is homeomorphic to the Baire set times $\mathbb D^{T}$. The leaves of the lamination are graphs of holomorphic functions over $\mathbb D^{T}$. The sinks persist along each leave of the lamination.
\vskip .2 cm

The existence of polynomial maps with infinitely many sinks has been previously proved. For example, in \cite{Bu}, the author constructs polynomial maps of $\mathbb C^2$ which have infinitely many sinks. These examples are polynomial with very large degree. The assumption on the degree has been dropped in \cite{Bi} , where the author shows the existence of polynomial of degree $d$, $d\geq 2$ of $\mathbb{C}^{3}$ which have infinitely many sinks. The same statement has been proved for holomorphic maps of $\mathbb{P}^{2}$ in \cite{5}. Here, we cover the case of polynomials of $\mathbb{C}^2$ of any degree. Furthermore, the stability of these maps has been analyzed.  In particular, Theorem $A$ implies that there are arbitrarly high dimensional holomorphic families of polynomial maps for which every map in the family has infinitely many sinks. This answers Question $2.1$ in \cite{6}.

We conclude with some words about the proofs of our theorems. In \cite{1} it has been shown that laminations of maps with infinitely many sinks exist in unfoldings of polynomials of ${\mathbb{R}^2}$. These laminations in $\text{Poly}_d\left({\mathbb{R}^2}\right)$ are in fact restrictions to the real slice of laminations in $\text{Poly}_d\left({\mathbb{C}^2}\right)$. In this paper we show that the maps in the extension also have infinitely many sinks. 

We characterize sinks by their trace and the Jacobian. This is why our theorems are valid in $\text{Poly}_d\left({\mathbb{C}^2}\right)$. We believe that the same holds for polynomials in $\mathbb C^n$.

\section{Preliminaries}
In this section we collect all definitions and relevant statements from \cite{1} needed to prove the main theorems.
The discussion begins with real polynomials of $\mathbb R^2$.

\bigskip

The following well-known linearization result is due to Sternberg.
\begin{theorem}\label{Ctlinearization}
Given $\left(\lambda,\mu\right)\in\mathbb R^{2}$, there exists $N\left(\lambda,\mu\right)\in\mathbb N$ such that the following holds. 
Let $f:\mathbb R^{2}\to \mathbb R^{2}$ be a polynomial with saddle point $p\in \mathbb R^{2}$ having unstable eigenvalue $|\mu|>1$ and stable eigenvalue $\lambda$. If
\begin{equation}\label{nonresonance}
\lambda\neq\mu^{k_1} \text{ and } \mu\neq\lambda^{k_2}
\end{equation}
for $k=\left(k_1,k_2\right)\in\mathbb N^{2}$, with $2\leq |k|=k_1+k_2\leq N$ and $N$ large enough, then $f$ is $\mathcal{C}^{4}$ linearizable.
\end{theorem}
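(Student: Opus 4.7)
The plan is to follow the classical Poincar\'e--Dulac strategy in two stages: first remove all non-resonant terms up to a high order by a polynomial change of coordinates, and then absorb the remaining flat tail by a smooth conjugacy obtained as the fixed point of a contraction.

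First I would translate $p$ to the origin and apply a linear change of coordinates so that $Df(0)=\operatorname{diag}(\lambda,\mu)$. The map then takes the form
\[
f(x,y)=(\lambda x,\mu y)+\sum_{|k|\ge 2}\bigl(a_k,b_k\bigr)x^{k_1}y^{k_2}.
\]
A monomial $x^{k_1}y^{k_2}$ in the first coordinate is called \emph{resonant} if $\lambda=\lambda^{k_1}\mu^{k_2}$, and in the second coordinate if $\mu=\lambda^{k_1}\mu^{k_2}$. One checks directly that the non-resonance hypothesis (\ref{nonresonance}) for $2\le|k|\le N$ rules out all such equalities in that range (using $|\mu|>1$ and the saddle condition on $\lambda$). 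Consequently, the homological equation at each order $|k|\le N$ can be solved for the coefficients of a polynomial change of variables $\varphi_N$, and by composing these successive polynomial conjugacies one obtains
\[
\varphi_N^{-1}\circ f\circ\varphi_N(x,y)=(\lambda x,\mu y)+R(x,y),
\]
where $R$ vanishes to order $N+1$ at the origin. This is the standard order-by-order Poincar\'e--Dulac reduction, with the non-resonance hypothesis playing exactly the role of invertibility for the homological operator.

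Next, I would construct a $\mathcal C^4$ conjugacy $h=\mathrm{id}+u$ from $L(x,y)=(\lambda x,\mu y)$ to the reduced map $g=L+R$ on a small neighborhood of $0$. Writing the conjugacy equation $g\circ h=h\circ L$ as
\[
u(Lz)-L\,u(z)=R\bigl(z+u(z)\bigr),
\]
one sets up a fixed point problem on the Banach space of $\mathcal C^4$ maps $u$, defined on a small invariant neighborhood of the origin, that vanish to order $N+1$ at $0$ and have sufficiently small $\mathcal C^4$ norm. The linear operator $u\mapsto L^{-1}u\circ L$ acts on monomials $x^{k_1}y^{k_2}$ with coefficient involving $\lambda^{k_1}\mu^{k_2}/\lambda$ or $\lambda^{k_1}\mu^{k_2}/\mu$, and its $\mathcal C^4$ norm on functions vanishing to order $N+1$ is controlled by a constant that tends to $0$ as $N\to\infty$; simultaneously, the contribution of $R$ is small because $R$ vanishes to order $N+1$ and $u$ is small in $\mathcal C^4$. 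Thus for $N=N(\lambda,\mu)$ large enough the fixed point operator is a contraction on a small closed ball of $\mathcal C^4$ maps, and the unique fixed point provides the desired $\mathcal C^4$ conjugacy.

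The main obstacle is the quantitative estimate used in the last step: one must show that by choosing $N$ large (in terms of $|\lambda|,|\mu|$ and the order of regularity $4$), the inverse of the homological operator gains enough contraction in $\mathcal C^4$ to dominate the multiplier loss coming from differentiating $u\circ L$ four times. This is the content of Sternberg's original argument, and it is where the threshold $N(\lambda,\mu)$ is determined: essentially $N$ must exceed a linear expression in $4$ and $\log|\mu|/\log|\lambda|$ (and its reciprocal). Composing the polynomial conjugacy $\varphi_N$ with the $\mathcal C^4$ conjugacy constructed above yields a $\mathcal C^4$ linearization of $f$ in a neighborhood of $p$, which is the claimed conclusion.
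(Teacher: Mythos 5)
The paper does not actually prove this theorem: it is stated as a known Sternberg-type result and the proof is delegated to the references \cite{BrKo, IlaYak}. Your two-stage plan --- a Poincar\'e--Dulac reduction to a normal form $L+R$ with $R$ flat to order $N+1$, followed by a fixed-point construction of a $\mathcal C^4$ conjugacy absorbing $R$ --- is exactly the strategy of those references, so you are reconstructing the intended argument rather than taking a different route.

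There are, however, two places where the proposal has real gaps. First, the claim that hypothesis (\ref{nonresonance}) ``rules out all such equalities'' $\lambda=\lambda^{k_1}\mu^{k_2}$ and $\mu=\lambda^{k_1}\mu^{k_2}$ for $2\le|k|\le N$ is not a direct check. The hypothesis as written excludes only the pure resonances $\lambda=\mu^{k_1}$ and $\mu=\lambda^{k_2}$; mixed resonances such as $\lambda^{k_1-1}\mu^{k_2}=1$ (for instance $\lambda\mu=1$, which makes $x^{2}y$ resonant in the first coordinate and $xy^{2}$ in the second) are perfectly compatible with $|\lambda|<1<|\mu|$ and are not excluded by (\ref{nonresonance}). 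To run the Poincar\'e--Dulac step you must either interpret (\ref{nonresonance}) as the full non-resonance condition of the cited references or argue separately why the surviving resonant monomials do not obstruct $\mathcal C^{4}$ linearization; as written, the homological equation is not solvable at those orders. Second, the fixed-point step is where essentially all the work lies, and for a saddle it is not the straightforward contraction you describe: $L$ is hyperbolic, a small invariant neighborhood of the origin does not exist, and the operator $u\mapsto L^{-1}\,u\circ L$ is itself hyperbolic rather than contracting. The standard remedy is to extend $R$ to all of $\mathbb R^{2}$ by a cutoff with small $\mathcal C^{4}$ norm and to split $u$ into stable and unstable components, solving one by forward and the other by backward iteration, with the flatness of order $N+1$ supplying the weights that make both series contract once $N$ exceeds an explicit function of $4$ and $\log|\mu|/|\log|\lambda||$. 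You correctly identify this as the source of the threshold $N(\lambda,\mu)$, but the proposal defers it entirely to ``Sternberg's original argument''; for a self-contained proof this is precisely the estimate that must be carried out.
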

\begin{definition}
 We say that $p$ satisfies the $\mathcal{C}^{4}$ non-resonance condition if (\ref{nonresonance}) holds.
\end{definition}

\begin{theorem}\label{familydependence}
Let $f:\mathbb R^{2}\to \mathbb R^{2}$ be a polynomial with saddle point $p\in \mathbb R^{2}$ which satisfies the $\mathcal{C}^{4}$ non-resonance condition. Let $0\in \mathcal P\subset\mathbb R^{n}$ and let $F:\mathbb R^{2}\times \mathcal P\to \mathbb R^{2}$ be a $\mathcal{C}^{\infty}$ family with $F_0=f$. Then, there exists a neighborhood $U$ of $p$ and a neighborhood $V$ of $0$ such that, for every $t\in V$, $F_t$ has a saddle point $p_t\in U$ satisfying the $\mathcal{C}^{4}$ non-resonance condition. Moreover $p_t$ is $\mathcal{C}^{4}$ linearizable in the neighborhood $U$ and the linearization depends $\mathcal{C}^{4}$ on the parameters. 
\end{theorem}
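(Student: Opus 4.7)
The statement has three ingredients to establish: (a) persistence of the saddle fixed point in a $\mathcal{C}^{\infty}$ way; (b) persistence of the non-resonance condition; and (c) joint $\mathcal{C}^{4}$ regularity in $(x,t)$ of the linearizing conjugacy. Parts (a) and (b) are soft; the substance lies in (c).

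For (a), the map $(x,t)\mapsto F_t(x)-x$ has $x$-derivative $df(p)-I$ at $(p,0)$, which is invertible because the eigenvalues $\lambda,\mu$ of $df(p)$ are off the unit circle ($|\mu|>1$ and $|\lambda|<1$, as $p$ is a saddle). The implicit function theorem, applied in the $\mathcal{C}^{\infty}$ category, yields a $\mathcal{C}^{\infty}$ family $t\mapsto p_t\in U$ of fixed points of $F_t$ on a neighborhood $V$ of $0$. The eigenvalues $\lambda_t,\mu_t$ of $dF_t(p_t)$ depend $\mathcal{C}^{\infty}$ on $t$, so the saddle condition persists after shrinking $V$. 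For (b), note that the inequalities in (\ref{nonresonance}) are finitely many open conditions on $(\lambda_t,\mu_t)$ thanks to the bound $|k|\le N$; after further shrinking $V$, all $p_t$ satisfy the $\mathcal{C}^{4}$ non-resonance condition.

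For (c), I would run Sternberg's proof parametrically by viewing the parameter as a trivially-dynamic coordinate. Consider the skew product $\tilde F(x,t)=(F_t(x),t)$ on $\mathbb R^{2}\times V$; its fixed point $(p,0)$ has spectrum $\{\lambda,\mu,1,\dots,1\}$, and every potential resonance that couples the $x$-eigenvalues with the trivial parameter eigenvalues collapses to a relation among $\lambda,\mu$ already excluded by (\ref{nonresonance}). With this in hand, the finitely many homological equations Sternberg uses to kill the non-resonant jets up to order $N$ remain uniformly solvable for $t\in V$, producing a polynomial change of coordinates $\mathcal{C}^{\infty}$ in $(x,t)$ that kills the non-resonant terms. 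The non-polynomial tail is then absorbed by the standard contraction argument in the Banach space of $\mathcal{C}^{4}$ maps in $(x,t)$ that are tangent to the identity in $x$; its fixed point is a conjugacy $H(x,t)$ of class $\mathcal{C}^{4}$ in $(x,t)$ linearizing each $F_t$ on $U$, and setting $h_t(\cdot)=H(\cdot,t)$ gives the claimed parametric linearization.

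The main obstacle is the uniform-in-$t$ bookkeeping in (c): the Sternberg small-divisor denominators depend continuously on $(\lambda_t,\mu_t)$, and one must ensure that they stay bounded away from zero as $t$ varies, so that the contraction constant is uniform in $t$ and the regularity in $t$ is not lost during the iterative scheme. This follows from the finiteness of the resonance list up to order $N$ together with the $\mathcal{C}^{\infty}$ dependence of $(\lambda_t,\mu_t)$ on $t$ obtained in step (a), but it is the point where the statement genuinely uses the strength of the $\mathcal{C}^{4}$ non-resonance condition rather than merely hyperbolicity.
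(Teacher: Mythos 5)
The paper does not prove this statement at all: it is quoted from the literature, with the proof attributed to \cite{BrKo, IlaYak}. So there is no in-paper argument to compare against; what you have written is a sketch of essentially the standard proof found in those references (Ilyashenko--Yakovenko's ``finitely smooth normal forms of local families'' is precisely the parametric Sternberg theorem obtained by suspending the parameter as a trivial dynamical variable). Your parts (a) and (b) are correct and complete as stated.

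In part (c) there is one claim that is not literally true and deserves repair. For the skew product $\tilde F(x,t)=(F_t(x),t)$ the added eigenvalues equal to $1$ do create resonances that do not ``collapse to a relation among $\lambda,\mu$ already excluded by (\ref{nonresonance})'': the identities $\lambda=\lambda\cdot 1^{j}$ and $\mu=\mu\cdot 1^{j}$ hold for every $j\ge 1$, so the monomials $x\,t^{j}$ and $y\,t^{j}$ are genuinely resonant and cannot be killed by the homological equations. They are harmless for your purpose --- they only encode the $t$-dependence of $p_t$ and of the linear part $dF_t(p_t)$, and disappear once you first translate $p_t$ to the origin and diagonalize $dF_t(p_t)$ fiberwise, which your step (a) permits --- but the argument should say this rather than assert that no new resonances occur. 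A second, related point: condition (\ref{nonresonance}) excludes only the ``pure'' resonances $\lambda=\mu^{k_1}$ and $\mu=\lambda^{k_2}$, not mixed ones such as $\lambda=\lambda^{k_1}\mu^{k_2}$ with $k_1,k_2\ge 1$ (e.g.\ when $\lambda\mu=1$), so a literal ``kill every jet up to order $N$, then contract'' scheme is not available under the stated hypothesis; one must use the refined two-dimensional saddle statements of \cite{BrKo, IlaYak}, which show that under exactly this weak non-resonance the surviving mixed-resonant terms do not obstruct a $\mathcal{C}^4$ linearization provided $N=N(\lambda,\mu)$ is large enough (this is also where the spectral-gap condition making your final contraction argument converge in the $\mathcal{C}^4$ topology enters). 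Your closing remark about uniformity of the small divisors in $t$ is the right concern and is handled correctly by finiteness of the resonance list together with continuity of $(\lambda_t,\mu_t)$.
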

The proofs of Theorem \ref{Ctlinearization} and Theorem \ref{familydependence} can be found in \cite{BrKo, IlaYak}. In the sequel we introduce the concept of a map with a strong homoclinic tangency which appears already in $\S 2$ of \cite{1}.

\begin{definition}
Let $f: \mathbb R^2 \rightarrow \mathbb R^2$ be a polynomial which is a local diffeomorphism satisfying the following conditions:\\
$(f1)$ $f$ has a saddle point $p \in \mathbb R^2$, with unstable eigenvalue $\mu$ and stable eigenvalue $\lambda$,\\
$(f2)$ $\left|\lambda \| \mu\right|^{3}<1$,\\
$(f3)$ $p$ satisfies the $\mathcal{C}^{4}$ non-resonance condition,\\
$(f4)$ $f$ has a non degenerate homoclinic tangency, $q_1\in W^u(p)\cap W^s(p)$,\\ 
$(f5)$$f$ has a transversal homoclinic intersection,  $q_2\in W^u(p)\pitchfork W^s(p)$, 
\\
$(f6)$ let $[p,q_2]^u\subset W^u(p)$ be the arc connecting $p$ to $q_2$, then there exist arcs  $W^u_{\text{\rm loc},n}(q_2)=[q_2, u_n]^u\subset W^u(q_2)$ such that $[p,q_2]^u\cap [q_2,u_n]^u=\left\{q_2\right\}$ and 
$$
\lim_{n\to\infty}f^n\left(W^u_{\text{\rm loc},n}(q_2)\right)=[p,q_2]^u,
$$\\
$(f7)$ there exist neighborhoods $W^u_{\text{\rm loc},n}(q_1)\subset W^u(q_1)$ such that 
$$
\lim_{n\to\infty}f^n\left(W^u_{\text{\rm loc},n}(q_1)\right)=[p,q_2]^u,
$$\\
$(f8)$ there exists $N\in\mathbb N$ such that 
$$
f^{-N}(q_1)\in [p,q_2]^u.
$$
A map $f$ with these properties is called a map with a strong homoclinic tangency.
\end{definition}
\begin{remark} If the unstable eigenvalue is negative, $\mu<-1$, then $(f6)$, $(f7)$, and $(f8)$ are redundant.
\end{remark}
\begin{remark} As shown in the proof of Theorem $C$ in \cite{1}, real polynomials with a strong homoclinic tangency exists.
\end{remark}
Next, following \cite{1}, we introduce the concept of unfolding of a map with a strong homoclinic tangency.

\bigskip 

Let $\mathcal{P}=[-r, r] \times[-r, r]$ with $r>0$. Given a map $f$ with a strong homoclinic tangency, we consider a $\mathcal{C}^{\infty}$ family $F: \mathcal{P} \times \mathbb R^2 \rightarrow \mathbb R^2$ through $f$ with the following properties:\\
$(F1)$ $F_{0,0}=f$,\\
$(F2)$ $F_{t, a}$ has a saddle point $p(t, a)$ with unstable eigenvalue $|\mu(t, a)|>1,$ stable eigenvalue $\lambda(t, a),$ and
$$
\frac{\partial \mu}{\partial t} \neq 0,
$$
$(F3)$ let $\mu_{\max }=\max _{(t, a)}|\mu(t, a)|, \lambda_{\max }=\max _{(t, a)}\left|\lambda(t, a)\right|$ and assume
$$
\lambda_{\max } \mu_{\max }^{3}<1,
$$
$(F4)$ there exists a $\mathcal{C}^{2}$ function $[-r, r] \ni t \mapsto q_{1}(t) \in W^{u}(p(t, 0)) \cap W^{s}(p(t, 0))$ such that $q_{1}(t)$ is a non degenerate homoclinic tangency.

\bigskip 
According to Theorem \ref{familydependence} we may make a change of coordinates to ensure  that the family $F$ is $\mathcal C^4$ and that, for all $(t,a)\in [-r_0,r_0]^{2}$ with $0<r_0<r$, $F_{t,a}$ is linear on the ball $[-2,2]^2$, namely $$F_{t,a}=\left(\begin{matrix}
\lambda(t,a)&0\\
0&\mu(t,a)\\
\end{matrix}\right).$$ 
Moreover, the saddle point $p(t,a)=(0,0)$ and the local stable and unstable manifolds satisfy:
\begin{itemize}
\item[-] $W^s_{\text{loc}}(0)=[-2,2]\times \left\{0\right\}$,
\item[-] $W^u_{\text{loc}}(0)=\left\{0\right\}\times [-2,2]$,
\item[-] $q_1(t)\in (0,1]\times \left\{0\right\}\subset W^s_{\text{loc}}(0)$,
\item[-] $q_2(t,a)\in \left\{0\right\}\times \left(\frac{1}{\mu},1\right)\subset W^u_{\text{loc}}(0)$,
\item[-] there exists $N$ such that $f^{N}(q_3(t))=q_1(t)$ where $q_3(t)=(0,1)$,
\item[-]$Df^N_{q_3}(e_1)\notin T_{q_1}W^s(0)
$
and it points in the positive $y$ direction.
\end{itemize}

\bigskip

The next lemma states that $q_3$ is contained in a curve of points whose vertical tangent vectors are mapped by $DF^{N}$ to horizontal ones. The proof is the same as the one of Lemma $2$ in \cite{1}. 
Let $(x,y)$ be in a neighborhood of $q_3$ and consider the point $$(X_{t,a}(x,y),Y_{t,a}(x,y))=F^N_{t,a}(x,y).$$
\begin{lemma}\label{functionc}

There exist $x_0, a_0>0$, a $\mathcal C^2$ function $c:[-x_0,x_0]\times [-t_0,t_0]\times  [-a_0,a_0]\to\mathbb R $ and a positive constant $Q$ such that 
$$\frac{\partial Y_{t,a}}{\partial y}\left(x, c(x,t,a)\right)=0,$$
and 
$$\frac{\partial^2 Y_{t,a}}{\partial y^2}\left(x, c(x,t,a)\right)\geq Q.$$
\end{lemma}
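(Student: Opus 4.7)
The plan is to derive the lemma from the non-degenerate homoclinic tangency condition at $q_1(0)$ via a direct application of the implicit function theorem. The geometric content I would keep in mind throughout is that $F^N_{t,a}$ sends the local unstable manifold at the origin, parametrized by $y$, to a piece of $W^u(p(t,a))$ near $q_1(t)$, and along this image piece one sees the tangency with the horizontal stable manifold $W^s_{\text{loc}}(0)$.

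First I would translate the hypotheses on $q_3 = (0,1)$ into analytic statements about $Y_{t,a}$. Since $DF^N_{q_3}$ maps the vertical vector $e_2$ to the tangent of $W^u$ at $q_1(0)$, and this tangent is horizontal (that is the meaning of the tangency with $W^s_{\text{loc}}(0)$), one gets
\begin{equation*}
\frac{\partial Y_{0,0}}{\partial y}(0,1) \;=\; 0.
\end{equation*}
Moreover, non-degeneracy of the tangency $q_1(0)$ means that the image curve $y \mapsto F^N_{0,0}(0,y)$ has non-vanishing curvature relative to $W^s_{\text{loc}}(0)$ at $y=1$; in coordinates this is exactly
\begin{equation*}
\frac{\partial^2 Y_{0,0}}{\partial y^2}(0,1) \;\neq\; 0,
\end{equation*}
and after possibly flipping a sign convention one can assume this value is positive, say equal to some $Q_0>0$.

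With these two ingredients in hand I would set
\begin{equation*}
G(x,t,a,y) \;=\; \frac{\partial Y_{t,a}}{\partial y}(x,y),
\end{equation*}
which is a $\mathcal C^3$ function of its arguments because $F$ is $\mathcal C^4$ in $(x,y,t,a)$ after the Sternberg change of coordinates. Then $G(0,0,0,1)=0$ and $\partial_y G(0,0,0,1) = Q_0 > 0$, so the implicit function theorem gives $x_0,t_0,a_0>0$ and a $\mathcal C^2$ function $c:[-x_0,x_0]\times[-t_0,t_0]\times[-a_0,a_0]\to \mathbb R$ with $c(0,0,0)=1$ solving $G(x,t,a,c(x,t,a))=0$, which is the first required identity. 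For the lower bound on $\partial^2_y Y_{t,a}$ along this curve, I would shrink the parameter box if necessary and use continuity of $\partial^2_y Y_{t,a}(x,c(x,t,a))$ together with its value $Q_0$ at the center to pick $Q := Q_0/2 > 0$.

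I do not expect any real obstacle: once the linearizing coordinates from Theorem \ref{familydependence} are in place, $F^N_{t,a}$ depends $\mathcal C^4$ on parameters, so the implicit function theorem applies uniformly and yields the claimed regularity of $c$. The only subtle point is reading the two analytic statements ($\partial_y Y = 0$ at $q_3$, and $\partial_y^2 Y > 0$ there) directly out of the geometric hypotheses $DF^N_{q_3}(e_1)\notin T_{q_1}W^s(0)$ and non-degeneracy of the tangency; once those are verified, the rest is automatic and exactly parallels Lemma $2$ of \cite{1}.
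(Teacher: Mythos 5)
Your proposal is correct and follows exactly the route the paper intends: the paper defers to Lemma~2 of \cite{1}, whose proof is precisely this implicit-function-theorem argument, using that the tangency forces $\partial Y_{0,0}/\partial y(0,1)=0$ (as $DF^N_{q_3}$ carries the vertical tangent of $W^u_{\mathrm{loc}}(0)$ at $q_3$ to the horizontal tangent of $W^u$ at $q_1$) and that non-degeneracy gives $\partial^2 Y_{0,0}/\partial y^2(0,1)\neq 0$, with the $\mathcal C^4$ regularity from Theorem~\ref{familydependence} supplying the claimed smoothness of $c$ and continuity supplying the uniform lower bound $Q$.
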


\begin{definition}\label{criticalpointcriticalvalue}
Let $\left(t,a\right)\in [-t_0,t_0]\times  [-a_0,a_0]$. We call the point $$c_{t,a}=\left(0, c(0,t,a)\right)$$ the {primary critical point} and 
 $$z_{t,a}=F_{t,a}^N\left(c_{t,a}\right)=(z_x(t,a),z_y(t,a))$$ the {primary critical value} of $F_{t,a}$.
\end{definition}
We are now ready for the definition of unfolding of a map with a strong homoclinic tangency.

\begin{definition}\label{unfolding}
A family $F_{t,a}$ is called an \emph{unfolding} of $f$ if it can be reparametrized such that\\
$(P1)$ $z_y(t,0)=0$,\\
$(P2)$ $\frac{\partial z_y(t,0)}{\partial a}\neq 0.$ 
\end{definition}
\begin{remark}\label{rem:zyhighta}
Without lose of generality we may assume that if $F$ is an unfolding then  $z_y(t,a)=a$, the primary critical value is  at height $a$ and the primary critical point $c(t,a)=(0,1)$.
\end{remark}
Consider a polynomial map $f$ with a strong homoclinic tangency and an holomorphic family 
$$\mathbb{D}\times \mathbb{D}\ni (t,a)\mapsto F_{t,a}\in \text{Poly}_d({\mathbb{C}^2}).
$$
such that the real part $(-1,1)\times (-1,1)\ni (t,a)\mapsto F_{t,a}\in \text{Poly}_d({\mathbb{R}^2})
$ is an unfolding of $f$.
This holomorphic family is also called an unfolding. Assume  that the unfolding is contained in a larger polynomial family
$$\mathbb{D}\times \mathbb{D}\times \mathbb{D}^T\ni (t,a, \tau)\mapsto F_{t, a, \tau}\in \text{Poly}_d({\mathbb{C}^2}).
$$

There is a local holomorphic change of coordinates such that the saddle point becomes $(0,0)$, the local stable manifold contains the unit disc in the $x$-axis, and the local unstable manifold contains the unit disc in the $y$-axis. 
Moreover, the restriction of the map to the invariant manifolds is linearized, that is 
\begin{equation}\label{semilinearization}
F(x,0)=(\lambda x,0) \text{ and } F(0,y)=(0, \mu y).
\end{equation}
The domain $\mathbb{D}\times \mathbb{D}$ where (\ref{semilinearization}) holds, is called the domain of semi-linearization.
The change of coordinates depends holomorphically on the parameters.  
Observe that in the domain of semi-linearization $F$ is not necessarily linear. Moreover, for $(x,y)$ in the domain of semi-linearization we have the following estimate 
\begin{equation}\label{eq:fxyinpspu}
 F(x, y) = (\lambda x + P_{s}(x, y), \mu y + P_{u}(x, y)),  
\end{equation}
where $P_{s}$ and $ P_{u}$ are holomorphic functions satisfying $P_s(x,0)=0$, $P_s(0,y)=0$, $P_u(x,0)=0$ and $P_u(0,y)=0$. Their derivatives at the origin are zero. This implies, for $(x,y)\in \mathbb{D}\times \mathbb{D}$, 
\begin{equation}\label{DFnCC}
DF(x,y)=\left(\begin{matrix}
\lambda+O(y) & O(x)\\
O(y) & \mu +O(x)
\end{matrix}\right).
\end{equation}

Choose a parameter $(t,a,\tau)$ and assume that there is a periodic point $p$ in the domain of semi-linearization which returns in $N$ steps into the domain of semi-linearization and then needs $n$ steps inside to return to itself. Let $$
(t,a, \tau)\mapsto \text{tr} DF^{N+n}_p.
$$
Observe that if $\text{\rm tr} DF^{N+n}_p=0$, then for $n\ge 1$ large enough,  the periodic orbit of $p$ is attractive, called {\it strong sink}. According to $\S 6$ in \cite{1}, for $n$ large enough, there exists an holomorphic function  
\begin{equation}\label{eq:sandescr}
sa_n: \mathbb{D}\times \mathbb{D}^T\to \mathbb{C}.
\end{equation}
with the following property. Along the graph of $sa_n$, in parameters of the form $(t,sa_n(t,\tau),\tau )$ there is a periodic point of period $N+n$ in the domain of semi-linearization which is a strong sink. Moreover, by Lemma $19$ in \cite{1},
\begin{equation}\label{eq:sanoforder1overmutothen}
sa_n=O\left(\frac{1}{\mu^n}\right).
\end{equation}
Besides, in \cite{1}, a holomorphic function $b_{n,n_{0}}$ has been constructed, 
\begin{equation}\label{eq:bnholomorphic}
b_{n,n_{0}}: \mathbb{D} \times \mathbb{D}^{T} \rightarrow \mathbb{C},
\end{equation}
 such that, along the graph of the map $b_{n,n_{0}}$, the map with parameter of the form $\left(t, b_{n, n_{0}}(t, \tau), \tau\right)$, has a non-degenerate homoclinic tangency of the original saddle. They are called secondary tangencies. We have the following proposition, see $\S 6$ in \cite{1}.
\begin{proposition}\label{prop:anandbnintersect}
The graphs of the functions $sa_{n}$ and $b_{n,n_{0}}$ intersect transversally. Moreover the intersection is the graph of an holomorphic function 
$$
\mathbb{D}^T\ni \tau\mapsto l_{n,n_0}(\tau)=(t(\tau), a(\tau)).
$$ 
\end{proposition}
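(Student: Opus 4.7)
The graphs of $sa_n$ and $b_{n,n_0}$ are both complex hypersurfaces in $\mathbb{D}\times\mathbb{D}\times\mathbb{D}^T$ that project holomorphically onto the $(t,\tau)$ coordinates. Their intersection is therefore the zero set of the single holomorphic function
\[
G(t,\tau)=sa_n(t,\tau)-b_{n,n_0}(t,\tau):\mathbb{D}\times\mathbb{D}^T\to\mathbb{C}.
\]
Transversality of the two graphs at a common point is equivalent to $\partial G/\partial t\neq 0$ at that point, and once this is known the holomorphic implicit function theorem produces a holomorphic function $\tau\mapsto t(\tau)$ with $\{G=0\}=\{t=t(\tau)\}$, after which we simply set $a(\tau)=sa_n(t(\tau),\tau)=b_{n,n_0}(t(\tau),\tau)$. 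So the entire proposition reduces to a uniform lower bound on $|\partial G/\partial t|$.

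To obtain this bound I would estimate the two $t$-derivatives separately. By \eqref{eq:sanoforder1overmutothen}, $sa_n=O(1/\mu^n)$ uniformly on $\mathbb{D}\times\mathbb{D}^T$; a Cauchy estimate on a slightly smaller disk in the $t$-variable then yields $\partial_t sa_n=O(1/\mu^n)$ as well. For the secondary-tangency function $b_{n,n_0}$, the explicit construction in $\S 6$ of \cite{1} describes its leading behavior in terms of the way the $n$-th iterate along the unstable manifold spreads by a factor $\mu^n$; combined with condition $(F2)$, which ensures $\partial\mu/\partial t\neq 0$, this forces $\partial_t b_{n,n_0}$ to be bounded below by a strictly positive constant independently of $n$. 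Consequently, for $n$ large enough,
\[
\frac{\partial G}{\partial t}=-\frac{\partial b_{n,n_0}}{\partial t}+O\!\left(\frac{1}{\mu^n}\right)
\]
does not vanish anywhere on $\mathbb{D}\times\mathbb{D}^T$, and the implicit function theorem applies, giving the desired holomorphic parametrization $l_{n,n_0}(\tau)=(t(\tau),a(\tau))$.

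\textbf{Main obstacle.} The genuine work is the lower bound on $\partial_t b_{n,n_0}$. In \cite{1} this estimate is proved in the real category by tracking how the secondary tangency parameter moves relative to the primary critical value; the calculation is already smooth in all parameters, but one must verify that it extends to, and stays uniform on, the full polydisk $\mathbb{D}\times\mathbb{D}^T$. This follows because the functions $sa_n$ and $b_{n,n_0}$ of \cite{1} are the real restrictions of the holomorphic objects in \eqref{eq:sandescr} and \eqref{eq:bnholomorphic}, and the real derivative bound is uniform on a real neighborhood, so by analytic continuation and the maximum principle the same quantitative estimate holds on the complex disk. After this is anchored, the rest of the argument is a routine application of the implicit function theorem in several complex variables.
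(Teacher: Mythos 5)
This proposition is not proved in the paper at all: it is imported verbatim from \S 6 of \cite{1} (``We have the following proposition, see \S 6 in \cite{1}''), so the only honest comparison is between your argument and the construction there. Your reduction --- set $G=sa_n-b_{n,n_0}$, show $\partial G/\partial t\neq 0$, and invoke the holomorphic implicit function theorem --- is the right skeleton. The problem is the key estimate you hang it on. You claim $\partial_t b_{n,n_0}$ is bounded below by a positive constant independent of $n$, while $\partial_t sa_n=O(|\mu|^{-n})$, so that transversality is ``uniform''. This is incompatible with the geometry of the construction: the secondary tangency curve for return time $n$ lives at height $a\sim \mu^{-n}$ (it is exactly the locus where the $n$-th return brings the folded piece of $W^u$ back to a tangency), and indeed the paper's proof of Theorem C uses that the graph of $b_{n,n_0}$ stays inside the sink strip $\mathcal{HA}_n$, i.e.\ within $\epsilon_0|\mu|^{-2n}$ of the graph of $sa_n$, over the whole box $P_{n,n_0}(\tau)$. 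Both curves are therefore exponentially flat; the same Cauchy estimate you apply to $sa_n$ applies to $b_{n,n_0}$ and kills your lower bound. The true content of the transversality in \cite{1} is a comparison of two exponentially small slopes, whose difference is nonzero but degenerates as $n\to\infty$ (it is where $(F2)$, $\partial\mu/\partial t\neq 0$, and the distinct leading coefficients of the two curves enter). Your argument replaces the actual delicate cancellation analysis with a premise that is false, so the central step is not established.

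Two further gaps. First, your mechanism for passing from the real estimate of \cite{1} to the complex polydisk --- ``by analytic continuation and the maximum principle the same quantitative estimate holds on the complex disk'' --- is not a valid inference: analytic continuation propagates identities, not inequalities, and the maximum principle gives upper bounds from boundary data, never a lower bound on $|\partial_t G|$ off the real slice (a holomorphic function whose derivative is bounded below on a real interval can perfectly well have a critical point elsewhere in $\mathbb{D}$). One needs either the asymptotic expansions of $sa_n$ and $b_{n,n_0}$ with errors controlled on the full complex domain (which is how \cite{1} sets things up, since both functions are constructed holomorphically there), or a Rouch\'e/Hurwitz-type argument. Second, the conclusion is global: the intersection must be the graph of a function defined on all of $\mathbb{D}^T$, which requires existence and uniqueness of the zero $t(\tau)$ of $G(\cdot,\tau)$ for every $\tau\in\mathbb{D}^T$ and a bound keeping $t(\tau)$ away from $\partial\mathbb{D}$ (the paper explicitly records that the points $(t(\tau),a(\tau))$ are uniformly bounded, citing \S 6 of \cite{1}); the implicit function theorem alone only gives a local graph near one intersection point, whose existence you also do not establish.
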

In particular, in the parameter $(l_{n,n_0}(\tau), \tau)$ the map has a non-degenerate secondary tangency and a strong sink. Observe that the points $(t(\tau), a(\tau),\tau)$ are in the graph of $sa_n$ and, as was shown in $\S 6$ of \cite{1}, they are uniformly bounded. 
\section{Proof of the Main Results}

We follow the idea of Newhouse boxes in \cite{1} to extend to Newhouse tubes in our case. Let $f$ be a polynomial with a strong homoclinic tangency and $F_{t,a}$, $(t,a)\in \mathbb{D}\times \mathbb{D}$, an unfolding of $f$. We define the tube around the strong sink curve $sa_n$ in the parameter space to be:

$$\mathcal{HA}_{n}=\left\{(t, a) \in\mathbb{D}\times \mathbb{D}\left|\right.| a-sa_{n}(t) | \leq \frac{\epsilon_{0}}{\left|\mu\left(t, sa_{n}(t)\right)\right|^{2 n}}\right\},$$

where $\epsilon_{0}$ is a small constant which will be adjusted later. 
\begin{definition}
Let $(t,a)\in \mathcal{HA}_{n}$. A periodic point $p$ is called {simple}, if it is of period $N+n$ and it is in the domain of semi-linearization $\mathbb D\times\mathbb D$. Furthermore, $p$ returns after $N$ iterates in the domain of semi-linearization, $F_{t,a}^{N}(p)\in \mathbb D\times\mathbb D$, and remains in it for the following $n$ iterates.
\end{definition}
Namely, we are going to prove the following proposition: 

\begin{proposition}\label{thm:neat}
There exists an $\epsilon_0>0$ such that, for $n$ large enough and for any $(t,a)$ in $\mathcal{HA}_{n}$, the map $F_{t,a}$ has an attracting simple period point.  
\end{proposition}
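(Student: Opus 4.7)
The strategy is to (i) locate a simple periodic point $p=p(t,a)$ for every $(t,a)\in\mathcal{HA}_n$ by holomorphic continuation from the strong-sink curve $\{a=sa_n(t)\}$, and then (ii) show that the two eigenvalues of $DF_{t,a}^{N+n}(p)$ lie inside the unit disc by separately bounding trace and determinant.

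For the existence step I would write $F^{N+n}=F^N\circ F^n$ and exploit that inside the domain of semi-linearization $\mathbb{D}\times\mathbb{D}$ the $n$ inner iterates are tightly controlled by the semi-linear form (\ref{DFnCC}), with $x$-contraction of order $|\lambda|^n$, while $F^N$ is a fixed non-linear return map whose fold behavior is governed by Lemma \ref{functionc}. On the curve $a=sa_n(t)$ the strong-sink orbit satisfies $\text{tr}\,DF^{N+n}=0$ and $\det DF^{N+n}=O(|\lambda\mu|^n)$, so $\det(DF^{N+n}-I)=1-\text{tr}+\det$ stays uniformly close to $1$; the holomorphic implicit function theorem then produces a continuation $p(t,a)$ on a neighborhood of $sa_n(t)$, and one must verify that its width in $a$ exceeds $\epsilon_0/|\mu(t,sa_n(t))|^{2n}$. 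The natural scale at which to perform this graph-transform argument is the ``sink window'' of size $O(1/|\mu|^{2n})$ identified in Lemma 19 of \cite{1}, which matches the chosen width of $\mathcal{HA}_n$ by design.

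For attractivity the determinant factors as
$$
\det DF^{N+n}_{p(t,a)}=\det DF^N_{F^n(p)}\cdot\det DF^n_{p},
$$
and (\ref{DFnCC}) combined with the dissipativity $|\lambda||\mu|^3<1$ gives $|\det DF^{N+n}_{p(t,a)}|\le C/|\mu|^{2n}$. The trace, viewed as a holomorphic function of $a$ along $p(t,a)$, is of the form $\mu^n\cdot h(t,a)$ where $h$ has a non-degenerate simple zero at $a=sa_n(t)$: this is precisely the input used in \S 6 of \cite{1} to realize $sa_n$ as a graph. Using $|a-sa_n(t)|\le\epsilon_0/|\mu|^{2n}$ and a first-order Taylor estimate for $h$ yields
$$
\bigl|\text{tr}\,DF^{N+n}_{p(t,a)}\bigr|\le C|\mu|^n\cdot\frac{\epsilon_0}{|\mu|^{2n}}=\frac{C\epsilon_0}{|\mu|^n}.
$$
Since each eigenvalue is bounded by $|\text{tr}|/2+\sqrt{|\text{tr}|^2/4+|\det|}$, and $|\lambda\mu^3|<1$ implies $\sqrt{|\det|}\le C/|\mu|^n$, both eigenvalues have modulus $\le C(\epsilon_0+1)/|\mu|^n$, which for $\epsilon_0$ small and $n$ large is strictly less than $1$, uniformly in $(t,a)\in\mathcal{HA}_n$.

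The main obstacle is the simultaneous uniformity of the two steps: the implicit function theorem must give a continuation valid on the whole tube width $\epsilon_0/|\mu|^{2n}$, and the Taylor estimate on $h$ must be controlled not just at $sa_n(t)$ but throughout the tube and holomorphically in the extra parameters $\tau\in\mathbb{D}^T$. The exponent $2n$ in the definition of $\mathcal{HA}_n$ is calibrated precisely so that, after multiplying by the $|\mu|^n$ appearing as the leading trace coefficient, the resulting bound $\epsilon_0/|\mu|^n$ matches the order of $\sqrt{|\det|}$ via $(|\lambda||\mu|)^{n/2}<|\mu|^{-n}$. Extending the relevant uniform bounds from the real slice treated in \cite{1} to the complex tube, using the holomorphy of $sa_n$ from (\ref{eq:sandescr})--(\ref{eq:sanoforder1overmutothen}), is the technical heart of the argument.
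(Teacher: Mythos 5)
Your overall strategy --- continue the simple periodic point off the curve $a=sa_n(t)$ by the implicit function theorem and then control trace and determinant to trap both eigenvalues in the unit disc --- is the same as the paper's, and your determinant estimate ($|\det DF^{N+n}_p|=O(|\lambda\mu|^n)\le C/|\mu|^{2n}$ from $|\lambda||\mu|^3<1$) is fine. The genuine gap is in the trace estimate, which is the heart of the proof. You write $\operatorname{tr} DF^{N+n}_{p(t,a)}=\mu^n\,h(t,a)$ and apply a first-order Taylor estimate to $h$ as if $\partial h/\partial a$ were bounded uniformly in $n$. It is not. By Lemma \ref{lem:dfn} the trace has the form $\tilde A(\lambda\mu)^n+a_{22}D\mu^n+a_{21}B$ with $D=(DF^N_p)_{22}$, and the periodic point itself sweeps through the fold at speed $|\partial p_y/\partial a|\asymp|\mu|^n$ (this is \eqref{eq:dpydafinal}, driven by the term $\partial F^{N+n}_y/\partial a\approx K\mu^n$); since $\partial D/\partial y$ is bounded away from zero because the family unfolds a non-degenerate tangency, the chain rule through $p(t,a)$ gives $\partial_a(a_{22}D\mu^n)\asymp\mu^{2n}$, i.e.\ $\partial_a h\asymp\mu^n$, not $O(1)$. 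This is exactly the content of Proposition \ref{prop:partialderivativestrace}, and its \emph{lower} bound $\frac{1}{K_s}|\mu|^{2n}\le|\partial_a\operatorname{tr} DF^{N+n}_p|$ shows your claimed estimate $|\operatorname{tr}|\le C\epsilon_0/|\mu|^n$ is actually false at the edge of the tube: there the trace is of order $\epsilon_0$, a small constant, not exponentially small. Your reading of the calibration is correspondingly off --- the exponent $2n$ in the width of $\mathcal{HA}_n$ is chosen to cancel the $\mu^{2n}$ growth of $\partial_a\operatorname{tr}$, not to match $\sqrt{|\det|}$.

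The conclusion survives once this is repaired, and the repair is precisely the paper's route: establish the two-sided bound $\frac{1}{K}|\mu|^{2n}\le|\partial_a\operatorname{tr}|\le K|\mu|^{2n}$ (Proposition \ref{prop:partialderivativestrace}), then run a maximal-interval continuation argument in $a$ starting from the trace-zero point on $\operatorname{graph}(sa_n)$: as long as $|\operatorname{tr}|\le 1/3$ the attracting periodic point persists, and the upper derivative bound gives $|\operatorname{tr}|\le K|\mu|^{2n}\Delta\le K\epsilon_0$ on the interval, so for $\epsilon_0$ small the interval cannot close before reaching the full tube width $\epsilon_0/|\mu|^{2n}$. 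Attractivity then comes from Lemma \ref{lem:attarctingperiodicpoint} with $|\operatorname{tr}|\le 1/3$ and $|\det|\le 1/40$, rather than from an exponentially small trace. So the missing ingredient in your proposal is the correct computation of $\partial_a\operatorname{tr}$, which requires tracking how the periodic point itself moves with $a$.
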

The proof of Proposition \ref{thm:neat} needs some preparation. Assume that $\mathbb{D}\times \mathbb{D}$ is the semi-linearization domain of $F_{t,a}$.
\begin{lemma}
Suppose $(x_{i},y_{i})_{i=1,2}\in \mathbb{D} \times \mathbb{D}$, $(\tilde{x_{i}},\tilde{y_{i}}) = F(x_{i},y_{i}) \in \mathbb{D} \times \mathbb{D}$, denote $x^{'}=$ max $\{ |x_{1}|, |x_{2}|\}$, $y^{'}=$ max $\{ |y_{1}|, |y_{2}|\}$, then we have 
\begin{equation*}
    \left|\tilde{x}_{2}-\tilde{x}_{1}\right| \leq \left|\lambda+O\left(y^{\prime}\right)\right|\,\left| x_{2}-x_{1}\right|+\left|O\left(x^{\prime}\right)\right|\,\left| y_{2}-y_{1} \right|,
\end{equation*}
\begin{equation*}
  \left|\tilde{y}_{2}-\tilde{y}_{1}\right| \leq|O\left(y^{\prime}\right)|\,| x_{2}-x_{1}\left|+|\mu+O\left(x^{\prime}\right)\right|\,| y_{2}-y_{1} |  .
\end{equation*}
In particular we have 
\begin{equation}\label{eq:tildex1}
 \left|\tilde{x}_{1}\right| \leq|\lambda+O(y_{1})|\,|x_{1}| ,
\end{equation}
\begin{equation}\label{eq:tildey1}
 \left|\tilde{y}_{1}\right| \leq|\mu+O\left(x_{1}\right)|\,|y_{1} |.
\end{equation}
\end{lemma}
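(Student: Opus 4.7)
The plan is to apply the fundamental theorem of calculus along the straight line segment joining $(x_1,y_1)$ and $(x_2,y_2)$ in the semi-linearization domain. Set
$$\gamma(s)=(1-s)(x_1,y_1)+s(x_2,y_2),\qquad s\in[0,1],$$
and write $F=(F^{(x)},F^{(y)})$. Then
$$\tilde{x}_2-\tilde{x}_1=(x_2-x_1)\int_0^1\partial_xF^{(x)}(\gamma(s))\,ds+(y_2-y_1)\int_0^1\partial_yF^{(x)}(\gamma(s))\,ds,$$
and the analogous identity for $\tilde{y}_2-\tilde{y}_1$. Since the segment $\gamma([0,1])$ stays inside the rectangle $\{|x|\leq x'\}\times\{|y|\leq y'\}$, I can substitute the entries of $DF$ from (\ref{DFnCC}) and bound them uniformly on the segment. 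Specifically, $|\partial_xF^{(x)}(\gamma(s))|\leq|\lambda+O(y')|$, $|\partial_yF^{(x)}(\gamma(s))|\leq|O(x')|$, $|\partial_xF^{(y)}(\gamma(s))|\leq|O(y')|$ and $|\partial_yF^{(y)}(\gamma(s))|\leq|\mu+O(x')|$, where the implicit constants come from the holomorphic bounds on $P_s,P_u$ on a slightly smaller bidisc. Pulling these bounds out of the integrals and applying the triangle inequality yields the first two displayed estimates.

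For the inequalities (\ref{eq:tildex1}) and (\ref{eq:tildey1}), rather than simply setting $(x_2,y_2)=(0,0)$ in the first two bounds (which would leave an unwanted cross term $|O(x_1)||y_1|$), I use the structural information on $P_s$ and $P_u$ coming from (\ref{eq:fxyinpspu}). Since $P_s(x,0)=P_s(0,y)=0$ and $P_s$ is holomorphic, we can factor $P_s(x,y)=xy\,R_s(x,y)$ with $R_s$ holomorphic on the bidisc; similarly $P_u(x,y)=xy\,R_u(x,y)$. Substituting into (\ref{eq:fxyinpspu}) gives
$$\tilde{x}_1=x_1\bigl(\lambda+y_1R_s(x_1,y_1)\bigr)=x_1\bigl(\lambda+O(y_1)\bigr),$$
$$\tilde{y}_1=y_1\bigl(\mu+x_1R_u(x_1,y_1)\bigr)=y_1\bigl(\mu+O(x_1)\bigr),$$
and taking absolute values yields the two claims.

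There is no real obstacle: the only subtle point is that $|\lambda+O(y')|$ in the statement has to be read as $|\lambda|+C|y'|$ for a constant $C$ depending only on $F$ and the bidisc, so one must check that the substitution of the $O$-bounds on $DF$ along $\gamma$ is uniform in $s$ (which it is, because $|\gamma(s)|$ is dominated componentwise by $(x',y')$). The factorisations of $P_s$ and $P_u$ are the key to absorbing the apparent extra term in (\ref{eq:tildex1})--(\ref{eq:tildey1}), and they follow at once from the vanishing of $P_s,P_u$ on both coordinate axes, which is itself a direct consequence of the semi-linearization identity (\ref{semilinearization}).
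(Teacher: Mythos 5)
Your proof is correct. For the two general estimates you argue exactly as the paper does: integrate $DF$ along the straight segment joining the two points and bound the entries using \eqref{DFnCC}, noting that the segment stays in the bidisc $\{|x|\le x'\}\times\{|y|\le y'\}$. For the ``in particular'' inequalities \eqref{eq:tildex1}--\eqref{eq:tildey1} you take a genuinely different (and equally valid) route. The paper simply reapplies the general estimates with a well-chosen comparison point: to get \eqref{eq:tildex1} it compares $(x_1,y_1)$ with $(0,y_1)$, so that $\tilde x_2=0$ by \eqref{semilinearization}, $y_2-y_1=0$ kills the cross term you were worried about, and $y'=|y_1|$; symmetrically it compares with $(x_1,0)$ for \eqref{eq:tildey1}. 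You instead factor $P_s(x,y)=xy\,R_s(x,y)$ and $P_u(x,y)=xy\,R_u(x,y)$, which is legitimate since a holomorphic function on the bidisc vanishing on both coordinate axes has no power-series terms with $j=0$ or $k=0$; this gives the slightly stronger identity $\tilde x_1=x_1(\lambda+O(y_1))$, $\tilde y_1=y_1(\mu+O(x_1))$ rather than just the upper bounds. Your approach costs an extra structural observation about $P_s,P_u$ but avoids having to spot the right comparison point; the paper's reduction is a one-line corollary of the general estimates. Either way the lemma is established.
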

\begin{proof} 
Consider the straight line $L$ through $\left( x_{1}, y_{1}\right)$ and $\left( x_{2}, y_{2}\right)$:
\begin{equation*}
\begin{aligned}
L:[0,1] &\longrightarrow \mathbb{D} \times \mathbb{D}\\ t &\rightarrow \left((1-t) x_{1} + t x_{2}, (1-t) y_{1}+t y_{2}\right)=\left(L_{x}, L_{y}\right)
\end{aligned}
\end{equation*}
then, by using \eqref{DFnCC}, we have 
\begin{equation*}
\begin{aligned}
\left|\tilde{x}_{2}-\tilde{x}_{1}\right| &= \left| F_{x}(L(1))-F_{x}(L(0)\right| \\
&\leq \int_{0}^{1}\left|\frac{\partial F_{x}}{\partial t}|_{L( t)} \right| d t \\
&\leq \int_{0}^{1}\left|\frac{\partial F_{x}}{\partial x}|_{L( t)} \right| \cdot \left|\frac{\partial L_{x}}{\partial t}\right|+ \left|\frac{\partial F_{x}}{\partial y}|_{L( t)} \right| \cdot \left|\frac{\partial L_{y}}{\partial t}\right|d t \\
&\leq \left|\lambda+O\left(y^{\prime}\right)\right|\,\left| x_{2}-x_{1}\right|+\left|O\left(x^{\prime}\right)\right|\,\left| y_{2}-y_{1} \right|.
\end{aligned}
\end{equation*}
Similarly, by considering the $y$ coordinate, we get the second estimate. For the last two inequalities it suffices to set $(x_{2},y_{2})=(0,y_{1})$ and $(x_{1},y_1)=(x_{1},0)$. 
\end{proof}

Estimates for orbits under semi-linearization have been studied previously. See for example \cite{7}. For completeness, using the above lemma, we prove the following estimates for the norm of coordinates of points which remain in the domain of semi-linearization after $n$ iterates. 

\begin{lemma}\label{lem3.2}
Let $(x,y)\in  \mathbb{D} \times \mathbb{D}$ such that $F^{i}(x,y)\in  \mathbb{D} \times \mathbb{D}$ for $i \leq n$. Denote $(x_{k},y_{k})=F^{k}(x,y)$, then for $n$ large enough we have $x_{k}=O(|\lambda|^{k})$ and $y_{k}=O(|\mu|^{-(n-k)})$.
\end{lemma}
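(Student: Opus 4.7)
The plan is to exploit that, inside the semi-linearization domain, $F$ is a controlled perturbation of the linear map $(x,y)\mapsto(\lambda x,\mu y)$. Writing $F(x,y)=(\lambda x+P_s(x,y),\mu y+P_u(x,y))$ as in \eqref{eq:fxyinpspu}, and using that $P_s$ and $P_u$ vanish on both coordinate axes, one factors $P_s(x,y)=xy\,R(x,y)$ and $P_u(x,y)=xy\,Q(x,y)$ with $R,Q$ holomorphic on $\mathbb{D}\times\mathbb{D}$. This furnishes a constant $C>0$ with $|P_s(x,y)|, |P_u(x,y)|\leq C|xy|$ on the semi-linearization domain, and hence the two workhorse recurrences
\begin{equation*}
|x_{k+1}|\leq(|\lambda|+C|y_k|)|x_k|,\qquad |y_{k+1}|\geq(|\mu|-C|x_k|)|y_k|,
\end{equation*}
where the second follows from the reverse triangle inequality applied to $y_{k+1}=\mu y_k+P_u(x_k,y_k)$. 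By shrinking the semi-linearization disk if necessary (or rescaling), $C$ can be assumed small enough that $|\mu|-C>1$.

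The argument then proceeds by bootstrapping. First, using the trivial a priori bound $|x_k|\leq 1$ in the lower recurrence gives $|y_k|\leq|y_{k+1}|/(|\mu|-C)$; iterating backwards from $|y_n|\leq 1$ yields the crude estimate $|y_k|\leq(|\mu|-C)^{-(n-k)}$. Feeding this into the $x$-recurrence produces
\begin{equation*}
|x_k|\leq|x_0|\,|\lambda|^k\prod_{j=0}^{k-1}\left(1+\frac{C}{|\lambda|(|\mu|-C)^{n-j}}\right).
\end{equation*}
Taking logarithms and using $\log(1+u)\leq u$, the exponent is dominated by a tail of a convergent geometric series in $(|\mu|-C)^{-1}<1$, so the product is bounded by a constant $B$ independent of $n$ and $k$. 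Hence $|x_k|=O(|\lambda|^k)$.

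Finally, inserting the improved bound $|x_k|\leq B|\lambda|^k$ back into the lower recurrence and iterating backwards from $|y_n|\leq 1$ gives
\begin{equation*}
|y_k|\leq\frac{1}{|\mu|^{n-k}}\prod_{j=k}^{n-1}\frac{1}{1-(CB/|\mu|)|\lambda|^j}.
\end{equation*}
The logarithm of the product is bounded by $\sum_{j\geq 0}(CB/|\mu|)|\lambda|^j<\infty$, so the product is uniformly bounded. This delivers $|y_k|=O(|\mu|^{-(n-k)})$, as required.

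The main obstacle is the bootstrap structure itself: $x$ and $y$ cannot be estimated simultaneously by a single one-sided recurrence, and the naive bounds $|x_k|,|y_k|\leq 1$ do not yield the sharp rates. One must first establish a crude backward bound on $y$ strong enough to make the $x$-recurrence produce a geometric decay at rate $|\lambda|$ with a constant uniform in $n$, and only then feed the sharp $x$-bound back to upgrade the $y$-bound. The condition $|\mu|-C>1$, arranged by shrinking the semi-linearization domain, is precisely what makes the crude first step yield a genuinely decaying estimate rather than one that grows with $n-k$.
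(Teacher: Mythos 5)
Your proof is correct and follows essentially the same route as the paper's: both factor $P_s,P_u$ through $xy$ to obtain multiplicative recurrences, derive crude geometric a priori bounds (forward for $x$, backward for $y$), and then control the logarithms of the correction products by convergent geometric sums. The only cosmetic difference is organizational --- the paper gets both a priori bounds from the uniform smallness of the perturbation terms and bounds $\sum_k|x_k|$, $\sum_k|y_k|$ before refining both estimates, whereas you bootstrap sequentially (crude $y$-bound, then sharp $x$-bound, then sharp $y$-bound) --- but the content is identical.
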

\begin{proof} 
By \eqref{eq:fxyinpspu}, we have 
\[
\begin{array}{l}
{x_{k+1}=\lambda x_{k}+P_{s}\left(x_{k}, y_{k}\right)=\lambda x_{k}+D_{k} x_{k}} \\
{y_{k+1}=\mu y_{k}+P_{u}\left(x_{k}, y_{k}\right)=\mu y_{k}+E_{k}y_{k}}
\end{array}
\]
where 
\begin{equation}\label{eq:dkekorderestimats}
\left|D_{k}\right|\leq M \left|y_{k}\right| \text{ and } \left|E_{k}\right|\leq M \left|x_{k}\right|
\end{equation}
for some constant $M$.  
Choose some positive number $s<min\{\frac{1}{2},|\mu|-1, 1-|\lambda|\}$. By shrinking the semi-linearization domain appropriately we may assume that \begin{equation}\label{eq:dkeks}
    \left|D_{k}\right|,\left|E_{k}\right| \leq s.
\end{equation}
Then we have a priori estimates for $x_{k},y_{k}$,
\begin{equation*}
\left|x_{k}\right|=\left|\lambda+D_{k-1}\right|\left|x_{k-1}\right|=\left|x_{0}\right| \prod_{i=0}^{k-1}\left|\lambda+D_{i}\right| \leq(|\lambda|+s)^{k}\left|x_{0}\right|,
\end{equation*}
\begin{equation*}
\left|y_{k}\right|=\frac{1}{\left|\mu+E_{k}\right|}\left|y_{k+1}\right|=\left|y_{n}\right| \prod_{j=k}^{n-1} \frac{1}{|\mu+E_{j}|} \leq \frac{1}{\left(|\mu|-s\right)^{n-k}}\left|y_{n}\right|.
\end{equation*}
Thus we have,
\begin{equation}\label{eq:xksumestimates}
\sum_{k=0}^{n}\left|x_{k}\right| \leq \sum_{k=0}^{n}(|\lambda|+s)^{k}\left|x_{0}\right| \leq \frac{1}{1-|\lambda|-s}\left|x_{0}\right|,
\end{equation}
and 
\begin{equation}\label{eq:yksumestimates}
\sum_{k=0}^{n}\left|y_{k}\right| \leq \sum_{k=0}^{n} \frac{1}{(|\mu |-s)^{n-k}}\left|y_{n}\right| \leq \frac{|\mu|-s}{|\mu|-s-1}\left|y_{n}\right|.
\end{equation}
Notice that the lemma is automatically true for $x_{0}$ and $y_{n}$. Then, for some $0\leq k\leq n$, we have   
\begin{equation}\label{eq:xkestimates}
\begin{aligned}
x_{k} &=\lambda\left(1+\frac{D_{k-1}}{\lambda}\right) x_{k-1} 
&=\lambda^{k} x_{0} \prod_{i=0}^{k-1}\left(1+\frac{D_{i}}{\lambda}\right),
\end{aligned}
\end{equation}
and 
\begin{equation}\label{eq:ykestimates}
\begin{aligned}
y_{k} &=\mu^{-1} \frac{1}{1+\frac{E_{k}}{\mu}} y_{k+1}
&=\mu^{-(n-k)} y_{n} \prod_{i=k}^{n-1}\left(\frac{1}{1+\frac{E_{i}}{\mu}}\right).
\end{aligned}
\end{equation}
Using \eqref{eq:xkestimates}, the fact $\ln(x)\leq x-1$ for $x>0$, \eqref{eq:dkekorderestimats} and \eqref{eq:yksumestimates}, we have
\begin{equation*}
\begin{aligned}
\ln \left|\frac{x_{k}}{\lambda^{k}}\right| & \leq \ln \left|x_{0}\right|+\sum_{i=0}^{k-1} \ln \left(1+\left|\frac{D_{i}}{\lambda}\right|\right) \leq \sum_{i=0}^{k-1} \frac{\left|D_{i}\right|}{|\lambda|} \\
& \leq \frac{M}{|\lambda|}\left(\sum_{k=0}^{n}\left|y_{k}\right|\right)\leq \frac{M}{|\lambda|}\left(\frac{|\mu|-s}{|\mu|-s-1}\right)
\end{aligned}.
\end{equation*}
Similarly, using \eqref{eq:ykestimates}, the fact $\ln(x)\leq x-1$ for $x>0$, \eqref{eq:dkeks}, \eqref{eq:dkekorderestimats} and \eqref{eq:xksumestimates}, we have
\begin{equation*}
\begin{aligned}
\ln \left|\frac{y_{k}}{\mu^{-(n-k)}}\right| & \leq \ln \left|y_{n}\right|+\sum_{i=k}^{n-1} \ln \frac{1}{\left|1+\frac{E_{i}}{\mu}\right|}  \leq  \sum_{i=k}^{n-1} \frac{{\left|E_{i}\right|}/{|\mu|}}{1-{\left|E_{i}\right|}/{|\mu|}}\\
& \leq\frac{1}{|\mu|-s} \sum_{i=0}^{n}\left|E_{i}\right| \leq \frac{M}{|\mu|-s} \left(\frac{1}{1-|\lambda|-s}\right).
\end{aligned}
\end{equation*}
Hence, there is a constant $C$ such that $\left|x_{k}\right| \leq C|\lambda|^{k}$ and $\left|y_{k}\right| \leq C|\mu|^{-(n-k)}$. 
\end{proof}

Combining Lemma \ref{lem3.2} and the proof of Lemma $17$ in \cite{1}, we get the following.
\begin{lemma} \label{lem:dfn}
If $(x, y) \in \mathbb{D} \times \mathbb{D}$  and $F^{i}(x, y) \in \mathbb{D} \times \mathbb{D}$,  for  $i \leq n$, then 
\begin{equation*}
D F^{n}(x, y)=\left(\begin{array}{cc}{a_{11} \lambda^{n} \mu^{n}} & {a_{12} \lambda^{n} \mu^{n}} \\ {a_{21}} & {a_{22} \mu^{n}}\end{array}\right)
\end{equation*}
where $a_{k l}$ are uniformly bounded holomorphic functions and $a_{22} \neq 0$ is uniformly away from zero. 
\end{lemma}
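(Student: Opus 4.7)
The plan is to compute $DF^n(x,y)$ as the ordered matrix product
\[
DF^n(x,y)=M_{n-1}M_{n-2}\cdots M_0,\qquad M_k:=DF(F^k(x,y)),
\]
and to analyze the growth of each entry along the induced recursion. By (\ref{DFnCC}) each factor has the form
\[
M_k=\begin{pmatrix}\lambda+\eta_k^{(1)} & \eta_k^{(2)} \\ \eta_k^{(3)} & \mu+\eta_k^{(4)}\end{pmatrix},
\]
with $|\eta_k^{(1)}|,|\eta_k^{(3)}|\leq C|y_k|$ and $|\eta_k^{(2)}|,|\eta_k^{(4)}|\leq C|x_k|$. Lemma \ref{lem3.2} bounds these by $C'|\mu|^{-(n-k)}$ and $C'|\lambda|^k$ respectively, so the off-diagonals decay geometrically, but at opposite ends of the orbit.

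Writing $T_k:=M_{k-1}\cdots M_0$ with entries $A_k,B_k,C_k,D_k$ and $T_0=I$, the identity $T_{k+1}=M_kT_k$ yields four coupled recursions. I would establish, by discrete variation of parameters with integrating factor $\lambda^k$ for the top row and $\mu^k$ for the bottom row, the uniform-in-$n$ a priori bounds
\[
|A_k|\leq K|\lambda|^k,\quad |B_k|\leq K|\lambda|^{k-1}|\mu|^k,\quad |C_k|\leq K|\mu|^{k-n},\quad K^{-1}|\mu|^k\leq|D_k|\leq K|\mu|^k,
\]
valid for every $0\leq k\leq n$. For $D_k$ one factors $\prod_{k<n}(\mu+\eta_k^{(4)})=\mu^n\prod_{k<n}(1+\eta_k^{(4)}/\mu)$; since $\sum|\eta_k^{(4)}/\mu|\leq C\sum|\lambda|^k/|\mu|<\infty$ the product is bounded above and away from zero, while the cross term $\eta_k^{(3)}B_k$ contributes only a perturbation of order $|\mu|^{-n}$. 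For $C_k$ the forcing $\eta_k^{(3)}A_k=O(|\mu|^{-(n-k)}|\lambda|^k)$ is summable after dividing by $\mu^k$. With $C_k$ controlled, $A_k$ is forced by $\eta_k^{(2)}C_k=O(|\lambda|^k|\mu|^{k-n})$, whose division by $\lambda^k$ again gives a uniformly convergent geometric series. Finally, $B_k$ is forced by $\eta_k^{(2)}D_k=O(|\lambda\mu|^k)$, producing the stated $|B_k|\leq K|\lambda|^{k-1}|\mu|^k$ via $\sum_{j<k}|\lambda|^{k-1-j}|\lambda\mu|^j$.

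At $k=n$ the bounds read $|A_n|\leq K|\lambda|^n$, $|B_n|\leq K|\lambda\mu|^n/|\lambda|$, $|C_n|\leq K$, and $|D_n|$ comparable to $|\mu|^n$. Setting $a_{11}:=A_n/(\lambda^n\mu^n)$, $a_{12}:=B_n/(\lambda^n\mu^n)$, $a_{21}:=C_n$ and $a_{22}:=D_n/\mu^n$ gives exactly the factorization of the statement, with all four $a_{ij}$ uniformly bounded and $|a_{22}|$ bounded away from zero. Holomorphy of every $a_{ij}$ in $(x,y)$ and in the parameters is automatic, since each entry of $T_n$ is a polynomial expression in the holomorphic data of $F$ along the orbit and $\lambda,\mu$ depend holomorphically on the parameters.

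The main technical obstacle is the coupling of the four recursions: $A_k$ is driven by $C_k$ and vice versa, and similarly for $(B_k,D_k)$. What makes the decoupling possible is the complementary geometric decay of the off-diagonals of $M_k$: $|\eta_k^{(2)}|=O(|\lambda|^k)$ is small for $k$ close to $n$, while $|\eta_k^{(3)}|=O(|\mu|^{k-n})$ is small for $k$ close to $0$, so that every round-trip through the coupling picks up a product of factors that is summable uniformly in $n$. In practice I would first establish crude a priori bounds such as $|A_k|\leq 2|\lambda|^k$ and $|D_k|\leq 2|\mu|^k$ by a direct induction exploiting the smallness of the off-diagonals, and then refine them by reinjecting them into the variation-of-parameters formulas to get the sharp estimates above.
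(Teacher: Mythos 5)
Your argument is correct and follows the route the paper itself intends: the paper gives no self-contained proof here, merely citing Lemma 17 of \cite{1} together with Lemma \ref{lem3.2}, and your product decomposition $DF^n=M_{n-1}\cdots M_0$ with the entries of each $M_k$ controlled via \eqref{DFnCC} and the orbit estimates $x_k=O(|\lambda|^k)$, $y_k=O(|\mu|^{-(n-k)})$, followed by discrete variation of parameters with summable forcing, is exactly the standard computation behind that citation. The only soft spot is the aside that one could ``first establish $|A_k|\leq 2|\lambda|^k$, $|D_k|\leq 2|\mu|^k$ by direct induction'': those bounds do not propagate verbatim step by step, and one must instead carry the Gronwall-type correction $\prod_{j<k}(1+\epsilon_j)$ with $\sum\epsilon_j<\infty$ (equivalently, induct on the normalized pairs $(A_k/\lambda^k,\,C_k/\mu^{k-n})$ and $(B_k/(\lambda^{k-1}\mu^k),\,D_k/\mu^k)$ jointly), which is precisely what your integrating-factor formulation already delivers.
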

In the following lemma we prove that a periodic point of period $N+n$ in the semi-linearization domain converges to $q_3(t)=F^{-N}(q_1(t))$, where $q_1(t)$ is the homoclinic tangency in the $x$-axis.
\begin{lemma}\label{lem:ptendstoq3}
Let $(t,a_n)\in \mathcal{HA}_{n}$ and $p_n$ a simple periodic point. Then 
$$
\lim_{n\to\infty}p_n=q_3(t).
$$
\end{lemma}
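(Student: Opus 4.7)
The plan is to apply Lemma \ref{lem3.2} to the return orbit of $p_n$ inside the semi-linearization domain to control both of its coordinates, and then use the non-degenerate critical-curve structure of $F_{t,a}^N$ from Lemma \ref{functionc} to pin down the position of $p_n$ relative to $q_3(t)$.

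First I would set $q_k = F_{t,a_n}^{N+k}(p_n)$ for $k=0,1,\dots,n$, so that $q_0 = F_{t,a_n}^N(p_n)$, each $q_k=(x_k,y_k)\in\mathbb D\times\mathbb D$ by the definition of a simple periodic point, and $q_n = p_n$. Applying Lemma \ref{lem3.2} to this orbit yields
\[
|x_k|\leq C|\lambda|^k,\qquad |y_k|\leq C|\mu|^{-(n-k)}.
\]
In particular $|x_n|\leq C|\lambda|^n \to 0$, so the first coordinate of $p_n$ tends to zero, and $|y_0|\leq C|\mu|^{-n}$, i.e.\ $F_{t,a_n}^N(p_n)$ is extremely close to the $x$-axis. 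It remains to show $y_n\to 1$. From $(t,a_n)\in\mathcal{HA}_n$ and \eqref{eq:sanoforder1overmutothen},
\[
|a_n|\leq \frac{\epsilon_0}{|\mu|^{2n}}+\frac{C}{|\mu|^{n}}\leq \frac{C'}{|\mu|^{n}}.
\]
By the Remark after Definition \ref{unfolding}, $z_y(t,a)=a$ and $c(0,t,a)=1$ identically. Lemma \ref{functionc} asserts that $\partial Y_{t,a}/\partial y$ vanishes along $y=c(x,t,a)$ with second derivative bounded below by $Q>0$. Taylor-expanding $Y_{t,a}(x,\cdot)$ about $c(x,t,a)$ then yields, on a fixed neighborhood $U$ of $q_3(t)$, a quadratic lower bound
\[
|Y_{t,a}(x,y) - a| \geq Q'\,|y - c(x,t,a)|^2.
\]
Since $Y_{t,a_n}(x_n,y_n)=y_0$ and both $|y_0|$ and $|a_n|$ are $O(|\mu|^{-n})$, this forces $|y_n - c(x_n,t,a_n)|\leq C''|\mu|^{-n/2}$. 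Using that $c$ is $\mathcal C^2$ with $c(0,t,a)=1$, together with $x_n,a_n\to 0$, we conclude $y_n\to 1$, and therefore $p_n \to (0,1)=q_3(t)$.

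The main obstacle is verifying that $p_n$ actually lies in the neighborhood $U$ where the quadratic lower bound applies; the definition of \emph{simple} places $p_n$ only in $\mathbb D\times\mathbb D$, while Lemma \ref{functionc} is purely local around $q_3(t)$. I expect this to follow from the a priori bound $|x_n|=O(|\lambda|^n)$, which asymptotically confines $p_n$ to the local unstable manifold $\{0\}\times\mathbb D$, combined with the geometry of $F^N$ near $q_3(t)$: the branch of $F^{-N}\bigl(\{|y|\le C|\mu|^{-n}\}\bigr)$ that passes through the $y$-axis near $y=1$ is the one corresponding to the tangency at $q_1(t)$, and the other preimages dictated by $(f5)$--$(f8)$ are uniformly bounded away from the $y$-axis. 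Thus the simple-orbit condition on $p_n$ selects the branch near $q_3(t)$ for $n$ large enough.
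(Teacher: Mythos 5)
Your proof is correct and follows essentially the same route as the paper: Lemma \ref{lem3.2} applied to the orbit segment $F^N(p_n),\dots,F^{N+n}(p_n)=p_n$ gives $[p_n]_x=O(\lambda^n)$ and $[F^N(p_n)]_y=O(1/\mu^n)$, the tube condition plus \eqref{eq:sanoforder1overmutothen} gives $a_n=O(1/\mu^n)$, and the non-degeneracy of the tangency upgrades this to $|p_n-q_3(t)|=O(1/\mu^{n/2})$. The only (cosmetic) difference is that you run the tangency estimate forward through $Y_{t,a}$ and the critical curve of Lemma \ref{functionc}, whereas the paper phrases it as the distance between the preimage curve $F_{t,a_n}^{-N}(\mathbb D\times\{0\})$ and $q_3(t)$; the branch-selection point you flag at the end is likewise left implicit in the paper's proof.
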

\begin{proof}
Observe that, because $F_{t,a_n}^{N}(p_n)$ remains in the domain of semi-linearization for the following $n$ iterates, then, by Lemma \ref{lem3.2}, $$\left[F_{t,a_n}^{N}(p_n)\right]_y=O\left(\frac{1}{\mu^n}\right),$$
and $$\left[p_n\right]_x=O\left({\lambda^n}\right).$$ 
 In particular $p_n$ is exponentially close to $F_{t,a_n}^{-N}(\mathbb D\times\left\{0\right\})$. Moreover by \eqref{eq:sanoforder1overmutothen} and the fact that $(t,a_n)\in \mathcal{HA}_{n}$, we have $a_n=O\left({1}/{\mu^n}\right)$. Observe that, $F_{t,0}^{-N}(\mathbb D\times\left\{0\right\})$ intersects the $y$ axis, $\left\{0\right\}\times\mathbb D$, in $q_3(t)$. Because $q_1(t)$ is a non degenerate homoclinic tangency, we get that the distance between $F_{t,a_n}^{-N}(\mathbb D\times\left\{0\right\})$ and  $q_3(t)$ is $O\left({1}/{\mu^{n/2}}\right)$. The lemma follows.
\end{proof}
By a simple calculation one gets the following lemma.
\begin{lemma}\label{lem:attarctingperiodicpoint}
Let $(t,a)\in \mathcal{HA}_{n}$ and $p$ a simple periodic point.  If $\left|\operatorname{tr} D F_{p}^{N+n} \right|\leq{1}/{3}$
and
$
 \left|det D F_{p}^{N+n}\right|\leq{1}/{40}
$
then $p$ is an attracting periodic point.
\end{lemma}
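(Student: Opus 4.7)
The plan is to show both eigenvalues of $DF^{N+n}_p$ have modulus strictly less than $1$; since $F^{N+n}$ fixes $p$ and this matrix is its derivative, that immediately gives attraction of the periodic orbit. The only information available is the trace $T=\operatorname{tr} DF^{N+n}_p$ and the determinant $D=\det DF^{N+n}_p$, so I would extract the eigenvalues directly from the characteristic polynomial $\rho^2 - T\rho + D = 0$ via the quadratic formula.

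Concretely, any eigenvalue is given by $\rho = \tfrac{1}{2}\left(T \pm \sqrt{T^2 - 4D}\right)$, where the square root is chosen in $\mathbb{C}$. The triangle inequality then yields
\begin{equation*}
|\rho| \le \tfrac{1}{2}\left(|T| + \sqrt{|T|^2 + 4|D|}\right).
\end{equation*}
Substituting the hypotheses $|T|\le 1/3$ and $|D|\le 1/40$ gives
\begin{equation*}
|\rho| \le \tfrac{1}{2}\left(\tfrac{1}{3} + \sqrt{\tfrac{1}{9} + \tfrac{1}{10}}\right) = \tfrac{1}{2}\left(\tfrac{1}{3} + \sqrt{\tfrac{19}{90}}\right) < \tfrac{1}{2}(0.334 + 0.46) < 1,
\end{equation*}
so both eigenvalues lie strictly inside the unit disc. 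Hence $p$ is attracting.

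There is no real obstacle here; this is why the lemma is stated as a simple calculation. The only point worth flagging is that $T$ and $D$ are complex, so one must use $|T^2 - 4D|\le |T|^2 + 4|D|$ rather than treating the discriminant as if it were real. The numerical constants $1/3$ and $1/40$ in the hypothesis are chosen precisely so that the estimate above produces a number comfortably below $1$ with room to spare, which is what will ultimately be used when these bounds are verified in $\mathcal{HA}_n$ for $n$ large.
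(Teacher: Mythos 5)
Your proof is correct and is precisely the ``simple calculation'' the paper alludes to without writing out: the eigenvalues of $DF^{N+n}_p$ are determined by the trace and determinant via the quadratic formula, and the bound $|\rho|\le \tfrac12\bigl(|T|+\sqrt{|T|^2+4|D|}\bigr)\le \tfrac12\bigl(\tfrac13+\sqrt{19/90}\bigr)<1$ puts both eigenvalues strictly inside the unit disc. Your remark about handling the complex discriminant via $|T^2-4D|\le |T|^2+4|D|$ is exactly the right care to take, since the whole point of the paper is that trace and determinant control sinks over $\mathbb{C}^2$ just as over $\mathbb{R}^2$.
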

Next we extend Lemma $18$ in \cite{1} by allowing the trace to be small but non zero. Observe that if $(t,a)\in \mathcal{HA}_{n}$ and $p$ is a simple periodic point with trace small enough, then, by the previous lemma, the periodic point is attracting and therefore it persists in a neighborhood of $(t,a)$. In particular, the partial derivative of the trace, with respect to $a$, is well defined. 
\begin{proposition}\label{prop:partialderivativestrace}
Choose $s$ large enough. There exists $K_s> 0$ such that the following holds. Let $(t,a)\in \mathcal{HA}_{n}$ and $p$ a simple periodic point.  If $$\left|\operatorname{tr} D F_{p}^{N+n} \right|\leq \left|\lambda\mu\right|^s$$ then, 
\begin{equation*}
\frac{1}{K_s}|\mu|^{2 n} \leq\left|\frac{\partial}{\partial a}\left(\operatorname{tr} D F_{p}^{N+n}\right)\right| \leq K_s|\mu|^{2 n}.
\end{equation*}
\end{proposition}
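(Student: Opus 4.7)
The plan is to adapt the proof of Lemma~$18$ in \cite{1}, which treats the degenerate case $\operatorname{tr} DF^{N+n}_p = 0$, to the regime $|\operatorname{tr}| \leq |\lambda\mu|^s$. The key perturbative observation is that this trace bound forces $p$ to lie within $O(|\mu|^{-n})$ of $q_3 = (0,1)$, exactly the regime where the leading-order Taylor expansions of \cite{1} remain sharp, up to constants absorbed into $K_s$.

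Concretely I would decompose $DF_p^{N+n} = DF^n_{F^N(p)} \cdot DF^N_p$, and, writing $m_{ij}$ for the entries of $DF^N_p$ and invoking Lemma~\ref{lem:dfn} for the other factor, obtain
\begin{equation*}
\operatorname{tr} DF_p^{N+n} = (a_{11}m_{11} + a_{12}m_{21})\lambda^n\mu^n + a_{21}m_{12} + a_{22}\, m_{22}(p)\, \mu^n,
\end{equation*}
with $a_{ij}$ uniformly bounded and $a_{22}$ bounded away from zero. By Lemma~\ref{functionc} and Remark~\ref{rem:zyhighta}, the scalar $m_{22}(p) = \partial Y_{t,a}/\partial y(p)$ vanishes at $q_3$ with $\partial_{p_y} m_{22}(q_3) \geq Q > 0$. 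Solving the trace formula for $m_{22}$ and using $|\operatorname{tr}| \leq |\lambda\mu|^s$ together with the boundedness of the remaining coefficients gives $|m_{22}| = O(|\mu|^{-n})$; the Taylor expansion of $m_{22}$ at $q_3$, combined with $|p_x| = O(|\lambda|^n)$ from Lemma~\ref{lem3.2}, then yields $|p_y - 1| = O(|\mu|^{-n})$.

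I would then differentiate in $a$, treating $p = p(t,a)$ as the implicit function defined by $F^{N+n}(p) = p$, which is well-defined holomorphically since $p$ is attracting by Lemma~\ref{lem:attarctingperiodicpoint}. The dominant contribution is $a_{22}\, \mu^n\, \partial_a m_{22}(p) \approx a_{22}\, Q\, \mu^n\, \partial_a p_y$, so the crux is to show $\partial_a p_y \sim \mu^n$. Applying the implicit function theorem to $F^{N+n}(p) - p = 0$ gives
\begin{equation*}
\partial_a p = (I - DF_p^{N+n})^{-1}\, \partial_a F^{N+n}(p);
\end{equation*}
here $I - DF_p^{N+n}$ has bounded inverse (its eigenvalues are close to $1$, since those of $DF_p^{N+n}$ are small at an attracting point), while the $y$-component of $\partial_a F^{N+n}(p)$ is of order $\mu^n$ because the only $a$-dependence with non-negligible effect is $\partial_a z_y \equiv 1$ at the step crossing the critical region, which is then amplified by the $\mu^n$-block from Lemma~\ref{lem:dfn} over the subsequent $n$ iterates inside the semi-linearization domain. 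Thus the dominant term in $\partial_a\operatorname{tr}$ has size $a_{22}\,Q\,\mu^{2n}$, while the remaining two terms in the trace contribute at most $O(\lambda^n\mu^{2n})$ and $O(\mu^n)$ respectively, both strictly smaller than $\mu^{2n}$ since $|\lambda| < 1$.

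The main obstacle is the careful bookkeeping needed to isolate the $\mu^{2n}$ term: one must verify that the derivatives of the bounded holomorphic coefficients $a_{ij}$ (controlled by Cauchy's estimate) contribute only lower-order terms, and that the chain rule through $p(t,a)$ does not produce hidden $\mu^n$ enhancements outside the identified dominant term. The hypothesis $|\operatorname{tr}| \leq |\lambda\mu|^s$ with $s$ large is precisely what keeps $p$ in the range where the Taylor expansion of $m_{22}$ is linearly dominated, so that all residual factors can be absorbed into a constant $K_s$ that deteriorates only as $s$ shrinks.
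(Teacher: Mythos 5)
Your overall strategy coincides with the paper's: decompose $\operatorname{tr} DF_p^{N+n}=\tilde A(\lambda\mu)^n+a_{22}m_{22}\mu^n+a_{21}m_{12}$, show that the dominant contribution to the $a$-derivative is $a_{22}\,\partial_y m_{22}\,\partial_a p_y\,\mu^n$ with $\partial_y m_{22}$ bounded below by the non-degeneracy of the tangency, and obtain $\partial_a p_y\asymp\mu^n$ from the unfolding condition $\partial_a z_y\equiv 1$ amplified through the $n$ semi-linearized iterates. However, there is a concrete error at the implicit-function step. The matrix $DF_p^{N+n}$ has lower-left entry of order $\mu^n$ (Lemma \ref{lem:dfn}), so it is violently non-normal: having small eigenvalues does \emph{not} give $(I-DF_p^{N+n})^{-1}$ a bounded norm. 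Indeed $(I-M)^{-1}=\frac{1}{\det(I-M)}\left(\begin{smallmatrix}1-M_{22}&M_{12}\\ M_{21}&1-M_{11}\end{smallmatrix}\right)$ has a $(2,1)$ entry of order $\mu^n$. Your conclusion $\partial_a p_y\asymp\mu^n$ survives only because that large entry happens to multiply the small component $\partial_a F_x^{N+n}=O((\lambda\mu)^n)$; this must be checked, not inferred from boundedness.

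More importantly, the "bounded inverse" shortcut loses exactly the estimate needed to finish the bookkeeping you defer at the end. Even if the inverse were bounded, you would only get $\partial_a p_x=O(\mu^n)$, and then the chain-rule term $\frac{\partial(a_{22}m_{22})}{\partial x}\,\partial_a p_x\,\mu^n$ in $\partial_a\operatorname{tr}$ would be of order $\mu^{2n}$ --- the same order as your claimed dominant term --- so it could a priori cancel it and destroy the lower bound. The paper resolves this by solving the $2\times2$ system from the two components of $F^{N+n}(p)=p$ explicitly, which yields the sharper bound $\partial_a p_x=O((\lambda\mu^2)^n)=o(\mu^n)$ (using $|\lambda\mu|<1$), and only then are all competing terms in the trace derivative genuinely $O(n|\mu|^n)$, i.e.\ subdominant. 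To repair your argument you should replace the bounded-inverse assertion with the explicit cofactor computation of $(I-DF_p^{N+n})^{-1}$ applied to the vector $(O((\lambda\mu)^n),K\mu^n)$, recording both the lower bound on $\partial_a p_y$ and the upper bound $O((\lambda\mu^2)^n)$ on $\partial_a p_x$.
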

\begin{proof}
From Lemma \ref{lem:dfn} and using the assumption on the trace, we get
\begin{equation*}
D F_{p}^{N+n}=D F_{F^{N}(p)}^{n} D F_{p}^{N}=\left(\begin{array}{cc}{O\left(\left(\lambda \mu\right)^{n}\right)} & {O\left(\left(\lambda \mu\right)^{n}\right)} \\ {O\left(\mu^{n}\right)} & {O\left(\left(\lambda \mu\right)^{n}\right)}\end{array}\right)          \text{ if } s \geq n,
\end{equation*}
and 
\begin{equation*}
D F_{p}^{N+n}=D F_{F^{N}(p)}^{n} D F_{p}^{N}=\left(\begin{array}{cc}{O\left(\left(\lambda \mu\right)^{n}\right)} & {O\left(\left(\lambda\mu\right)^{n}\right)} \\ {O\left(\mu^{n}\right)} & {O\left(\left(\lambda \mu\right)^{s}\right)}\end{array}\right)  \text{ if } s < n.
\end{equation*}
Choose $s$ large but fixed. 
The periodic point $p = (p_x, p_y)$ has coordinates $(p_x, p_y) \in \mathbb D \times\mathbb  D$. We claim, by differentiating with respect to a the $x$-component of the equation $F^{N+n}(p_x, p_y) = (p_x, p_y)$ that
\begin{equation}\label{eq:dpyda}
\left(1+O\left(\left(\lambda \mu\right)^{n}\right)\right) \frac{\partial p_{x}}{\partial a}=O\left(\left(\lambda \mu\right)^{n}\right) \frac{\partial p_{y}}{\partial a}+\frac{\partial F_{x}^{N+n}}{\partial a}=O\left(\left(\lambda \mu\right)^{n}\right) \frac{\partial p_{y}}{\partial a}+O\left(\left(\lambda \mu\right)^{n}\right)
\end{equation}
Correspondingly, for the $y$-component, we claim
\begin{equation}\label{eq:dpxda}
\left(1+O\left(\left(\lambda \mu\right)^{s}\right)\right) \frac{\partial p_{y}}{\partial a}=O\left(\mu^{n}\right) \frac{\partial p_{x}}{\partial a}+\frac{\partial F_{y}^{N+n}}{\partial a}=O\left(\mu^{n}\right) \frac{\partial p_{x}}{\partial a}+K \mu^{n}
\end{equation}
where $K > 0$ is bounded away from zero. Observe that 
\begin{eqnarray*}
\frac{\partial F_{y}^{N}}{\partial a}(p)&=&\frac{\partial F_{y}^{N}}{\partial a}(p)(q_3,t,0)\\&+&\frac{\partial }{\partial x}\left(\frac{\partial F_{y}^{N}}{\partial a}\right)\left(p_x-q_{3,x}\right)+\frac{\partial }{\partial y}\left(\frac{\partial F_{y}^{N}}{\partial a}\right)\left(p_y-q_{3,y}\right)\\&+&\frac{\partial }{\partial a}\left(\frac{\partial F_{y}^{N}}{\partial a}\right)a=1+o(1)+O\left(\frac{1}{\mu^n}\right)
\end{eqnarray*}
where we used Remark \ref{rem:zyhighta}, Lemma \ref{lem:ptendstoq3} and \eqref{eq:sanoforder1overmutothen}.

The estimates for $\partial F_{x}^{N+n} / \partial a \text { and } \partial F_{y}^{N+n} / \partial a$
are obtained as follows. Observe that
\begin{equation*}
\begin{aligned} \frac{\partial F_{y}^{N+n}}{\partial a} &=a_{21} \frac{\partial F_{x}^{N}}{\partial a}(p)+a_{22} \mu^{n} \frac{\partial F_{y}^{N}}{\partial a}(p)+\frac{\partial F_{y}^{n}}{\partial a}\left(F^{N}(p)\right) \\ &=O(1)+a_{22}\left(1+o(1)\right) \mu^{n}+\frac{\partial}{\partial a} \int_{0}^{F_{y}^{N}(p)}\left[D F^{n}\left(F_{x}^{N}(p),y\right)\left(\begin{array}{c}{0} \\ {1}\end{array}\right)\right]_{y} d y \\ &=O(1)+a_{22}\left(1+o(1)\right) \mu^{n}+\frac{\partial}{\partial a} \int_{0}^{F_{y}^{N}(p)} a_{22}\left(F_{x}^{N}(p), y\right) \mu^{n} d y \\ &=O(1)+a_{22}\left(1+o(1)\right) \mu^{n}+O\left(n \mu^{n} F_{y}^{N}(p)\right) \\ &=K \mu^{n} \end{aligned}
\end{equation*}
where we used that \(F^{i}\left(F_{y}^{N}(p)\right)\) for \(i<n\) is in the domain of semi-linearization, namely \(F_{y}^{N}(p)=\)
\(O\left(1 / \mu^{n}\right)\) and \(O\left(n \mu^{n} F_{y}^{N}(p)\right)=O(n) \). Similarly,
\begin{equation*}
\begin{aligned} \frac{\partial F_{x}^{N+n}}{\partial a} &=a_{11}\left(\lambda \mu\right)^{n} \frac{\partial F_{x}^{N}}{\partial a}(p)+a_{12}\left(\lambda \mu\right)^{n} \frac{\partial F_{y}^{N}}{\partial a}(p)+\frac{\partial F_{x}^{n}}{\partial a}\left(F^{N}(p)\right) \\ &=O\left(\left(\lambda \mu\right)^{n}\right)+\frac{\partial}{\partial a}\left(\int_{0}^{F_{y}^{N}(p)} a_{12}\left(F_{x}^{N}(p), y\right)\left(\lambda \mu\right)^{n} d y+O(\lambda^{n} F_{x}^{N}(p))\right) \\ &=O\left(\left(\lambda \mu\right)^{n}\right)+O\left(n \lambda^{n}\right) \\ &=O\left(\left(\lambda \mu\right)^{n}\right) 
\end{aligned}
\end{equation*}
where we used that \(F_{y}^{N}(p)=O\left(1 / \mu^{n}\right)\). From \eqref{eq:dpxda}, \eqref{eq:dpyda} and the fact that $\lambda\mu^{2}<1$, we have
\begin{equation}\label{eq:dpydafinal}
\frac{1}{2 K}|\mu|^{n} \leq\left|\frac{\partial p_{y}}{\partial a}\right| \leq 2 K|\mu|^{n},
\end{equation}
and
\begin{equation}\label{eq:dpxdafinal}
\left|\frac{\partial p_{x}}{\partial a}\right|=O\left(\left(\lambda \mu^{2}\right)^{n}\right)
\end{equation}
Observe that, 
\begin{equation*}
\operatorname{tr} D F_{p}^{N+n}=\tilde{A}\left(p_{x}, p_{y}, t, a\right)\left(\lambda \mu\right)^{n}+a_{22}\left(p_{x}, p_{y}, t, a\right) D\left(p_{x}, p_{y}, t, a\right) \mu^{n}+a_{21} B\left(p_{x}, p_{y}, t, a\right)
\end{equation*}
where \(D\) is the entry \(\left(D F_{p}^{N}\right)_{22},\) \(B=\left(D F_{p}^{N}\right)_{12}\) and, by Lemma \ref{lem:dfn},
the factors \(\tilde{A}\) are \(\mathcal{C}^{1}\) uniformly bounded when \(n\) gets large. Hence,
\begin{equation}\label{eq:tracedfnN}
\begin{aligned} \frac{\partial}{\partial a}\left(\operatorname{tr} D F_{p}^{N+n}\right) &=\left[\frac{\partial \tilde{A}}{\partial x} \frac{\partial p_{x}}{\partial a}+\frac{\partial \tilde{A}}{\partial y} \frac{\partial p_{y}}{\partial a}+\frac{\partial \tilde{A}}{\partial a}\right]\left(\lambda \mu\right)^{n}+n \tilde{A}\left(\lambda \mu\right)^{n-1} \frac{\partial \lambda \mu}{\partial a} \\ &+\left[\frac{\partial\left(a_{22} D\right)}{\partial x} \frac{\partial p_{x}}{\partial a}+\frac{\partial\left(a_{22} D\right)}{\partial y} \frac{\partial p_{y}}{\partial a}+\frac{\partial\left(a_{22} D\right)}{\partial a}\right] \mu^{n} \\ &+n a_{22} D \mu^{n-1} \frac{\partial \mu}{\partial a}+\frac{\partial\left(a_{21} B\right)}{\partial a} \\ &=O\left(n|\mu|^{n}\right)+\frac{\partial\left(a_{22} D\right)}{\partial y} \frac{\partial p_{y}}{\partial a} \mu^{n} \end{aligned}
\end{equation}
Where we use \eqref{eq:dpxdafinal}, \eqref{eq:dpydafinal}. Observe that,
\begin{equation*}
\frac{\partial\left(a_{22} D\right)}{\partial y}=\frac{\partial a_{22}}{\partial y} D+a_{22} \frac{\partial D}{\partial y}
\end{equation*}
is bounded away from zero. First, $D$ tends to zero, because \(a_{22} D \mu^{n} + a_{21}B =O ((\lambda\mu)^s)-\tilde{A}\left(\lambda \mu\right)^{n}\), $a_{22}$ is bounded away from zero by Lemma \ref{lem:dfn}, and $ \partial D / \partial y$ is away from zero because the family is an unfolding of a non-degenerate tangency. The lemma follows from \eqref{eq:dpydafinal} and \eqref{eq:tracedfnN}.
\end{proof}

\begin{proof}[Proof of Proposition \ref{thm:neat}]
Let $(t,a_0)\in\text{graph}(sa_n)$, then $F_{t,a_0}$ has a simple periodic orbit $p_{t,a_0}$ with trace zero.  Let $\left[a_0-\Delta,a_0+\Delta\right]$, with $\Delta\leq\epsilon_0/\mu^{2n}$, be the maximal interval such that $F_{t,a}$, $a\in \left[a_0-\Delta,a_0+\Delta\right]$ has an attracting simple periodic point $p:=p_{t,a}$ with 
$$
\left|\operatorname{tr} D F_p^{N+n}\right|\ \leq \frac{1}{3}.
$$
Observe that $p$ depends holomorphically on $t$ and $a$. 
Moreover, by Proposition \ref{prop:partialderivativestrace},
\begin{equation*}
\frac{1}{K}|\mu|^{2 n} \leq\left|\frac{\partial}{\partial a}\left(\operatorname{tr} D F_{p}^{N+n}\right)\right| \leq K|\mu|^{2 n}
\end{equation*}
where K is a positive constant.  Hence, for every $a\in \left[a_0-\Delta,a_0+\Delta\right]$, 
$$
\left|\operatorname{tr} D F_p^{N+n}\right|\leq K\mu^{2n}\Delta.
$$
If $\epsilon_0$ is small enough, then $\Delta=\epsilon_0/\mu^{2n}$. The proposition follows by applying Lemma \ref{lem:attarctingperiodicpoint}.
\end{proof}
 We are now ready to prove our main theorems. Theorem $A$ and Theorem $B$ are a direct consequence of Theorem $C$. Indeed the families in Theorems $A$ and $B$ are a concrete example of the unfoldings in Theorem $C$. It is then enough to prove Theorem $C$. 

\begin{proof}[Proof of Theorem $C$]
Let $F_{0,0,0}\in \text{Poly}_d({\mathbb{R}^2})$ be a polynomial with a strong homoclinic tangency and $F:\mathbb D\times\mathbb D\times \mathbb D^T\to \text{Poly}_d({\mathbb{C}^2})$, $T\geq 1$, an holomorphic family which unfolds $F_{0,0,0}$. Consider the holomorphic functions, 
$$
b_{n,n_{0}}: \mathbb{D} \times \mathbb{D}^{T} \rightarrow \mathbb{C},
$$
and 
$$
sa_{n}: \mathbb{D} \times \mathbb{D}^{T} \rightarrow \mathbb{C},
$$
as defined in \eqref{eq:bnholomorphic} and \eqref{eq:sandescr}. By Proposition \ref{prop:anandbnintersect}, the graphs of these functions intersect transversally in the graph of the function 
$$
 l_{n,n_0}:\mathbb{D}^{T} \rightarrow \mathbb{C}\times \mathbb{C}.
$$
Choose $w>0$ small. For a given $\tau\in \mathbb{D}^{T} $ define 
$$
P_{n,n_0}(\tau)=\left\{(t,a,\tau)\left|\right.\left| t-\left[l_{n,n_0}(\tau)\right]_x\right|\leq w, \left| a-b_{n,n_0}(t,\tau)\right|\leq w\right\}.
$$
Let $\mathcal {HA}_n(\tau)$ be the sink strip of the family $(t,a)\to F_{t,a,\tau}$.

For $w$ small enough, we can apply the same argument as in the proof of Proposition $5$ in \cite{1}, and we get that the restricted family $F:P_{n,n_0}(\tau)\to \text{Poly}_d({\mathbb{C}^2})$ is an unfolding of the map $F_{l_{n,n_0}(\tau),\tau}$ in the intersection of the graphs of $sa_n$ and $b_{n,n_0}$. In fact, for real $\tau$, the map $F_{l_{n,n_0}(\tau),\tau}$ has a strong homoclinic tangency, namely the secondary tangency. 

Moreover, by the same argument as in the proof of Proposition $5$ in \cite{1}, $P_{n,n_0}(\tau)\subset \mathcal {HA}_n(\tau)$. In particular each map in $P_{n,n_0}(\tau)$ has an attracting simple periodic orbit of period $n+N$.

The boxes $P_{n,n_0}(\tau)$ move holomorphically in $\tau$. Let 
$$
P_{n,n_0}\left(\mathbb D^T\right)=\cup_{\tau\in\mathbb D^T}P_{n,n_0}(\tau).
$$
We continue now the construction inductively in each $P_{n,n_0}\left(\mathbb D^T\right)$. Observe that each $P_{n,n_0}\left(\mathbb D^T\right)$ is an unfolding of the maps  in $P_{n,n_0}\left(\left(-1,1\right)^T\right)$ with a strong homoclinic tangency. Now we proceed exactly as in the proof of Theorem $B$ in \cite{1} and we create consecutive generations of nested sets  $P^k_{n^{(k)},n^{(k)}_0}\left(\mathbb D^T\right)$.
The required lamination is defined as 
 $$L_F=\bigcap_g\bigcup_{\underline n\in\mathfrak N^g}\mathcal P^g_{n^{(g)},n_0^{(g)}}\left(\mathbb D^T\right)$$ 
 and it consists of maps with infinitely many attracting periodic points of arbitrarily high periods.
 \end{proof}


\end{document}